\newtheorem{Theorem}{Theorem}[section]
\newtheorem{Corollary}[Theorem]{Corollary}
\newtheorem{Proposition}[Theorem]{Proposition}
\newtheorem{Lemma}[Theorem]{Lemma}
\theoremstyle{definition}
\newtheorem{Definition}[Theorem]{Definition}
\newtheorem{Remark}[Theorem]{Remark}
\begin{document}

\title[Outer automorphisms of mapping class groups]
{Outer automorphisms of mapping class groups of nonorientable
surfaces}

\author{Ferihe Atalan}
\address{Department of Mathematics, Atilim University,
06836 \newline Ankara, TURKEY} \email{fatalan@atilim.edu.tr}

\date{\today}
\subjclass{57M99, 57N05}\keywords{Nonorientable surface, outer automorphism of
mapping class groups} \pagenumbering{arabic}

\begin{abstract}Let $N_{g}$ be the connected closed nonorientable surface of
genus $g\geq 5$ and ${\rm Mod}(N_{g})$ denote the mapping class group of
$N_{g}$. We prove that the outer automorphism group of ${\rm
Mod}(N_{g})$ is either trivial or $\mathbb{Z}$ if $g$ is odd, and
injects into the mapping class group of sphere with four holes
if $g$ is even.

\end{abstract}

\maketitle
\section{Introduction and statement of results}

Let $N_{g,n}$ be a connected nonorientable surface of genus $g$ with
$n$ holes (boundary components) and let ${\rm Mod}(N_{g,n})$ denote
the mapping class group of $N_{g,n}$, the group of isotopy classes
of all diffeomorphisms $N_{g,n} \to N_{g,n}$. Our work is motivated by
the following theorem of N. V. Ivanov.

\begin{Theorem}(Ivanov)\label{Ivanov}
Let $M = {\rm Mod}(S)$, ${\rm PMod}(S)$, ${\rm Mod}^{*}(S)$ or ${\rm
PMod}^{*}(S)$. If $S$ is neither a sphere with at most $4$ holes,
nor a torus with at most $2$ holes, nor a closed surface of genus
$2$, then every automorphism of $M$ is given by the restriction of
an inner automorphism of ${\rm Mod}^{*}(S)$. In particular, ${\rm
Out}\,({\rm Mod}(S))$ is a finite group and moreover, ${\rm Out}
\,({\rm Mod}^{*}(S)) = 1$, ${\rm Out}({\rm Mod}(S))= \mathbb{Z} / 2
\mathbb{Z}$.
\end{Theorem}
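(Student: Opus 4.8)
The plan is to reduce the study of $\mathrm{Aut}(M)$ to the rigidity of the complex of curves $\mathcal{C}(S)$, whose vertices are isotopy classes of essential simple closed curves on $S$ and whose simplices are collections of pairwise disjoint such classes. The bridge between the two objects is the Dehn twist: to each vertex $a$ of $\mathcal{C}(S)$ one attaches the twist $T_a \in M$, and $i(a,b) = 0$ precisely when $T_a$ and $T_b$ commute. Thus, if one can show that an arbitrary automorphism $\phi$ of $M$ carries Dehn twists to Dehn twists, then $\phi$ induces a bijection $\phi_*$ of the vertex set of $\mathcal{C}(S)$ preserving disjointness, i.e. a simplicial automorphism of $\mathcal{C}(S)$.

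The first, and I expect hardest, step is a purely group-theoretic characterization of Dehn twists inside $M$ (or of their conjugacy class together with enough extra structure to pin down the curve and the exponent $\pm 1$). Here I would use the standard package of invariants that behave well under automorphisms: ranks of maximal free abelian subgroups (realized by multitwists along pants decompositions, equal to $3g-3+n$, one more than the dimension of $\mathcal{C}(S)$), centralizers of twists and of multitwists, and the two fundamental two-generator relations — the commutation relation $\langle T_a, T_b\rangle \cong \mathbb{Z}^2$ when $i(a,b)=0$, and the braid relation $T_aT_bT_a = T_bT_aT_b$ when $i(a,b)=1$. Combining these with root and divisibility properties isolates powers of twists about nonseparating curves, and a finer analysis of how such elements sit inside abelian subgroups recovers the twists themselves. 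This is exactly where the low-complexity exclusions enter: on a sphere with $\le 4$ holes, a torus with $\le 2$ holes, or a closed genus-$2$ surface, $\mathcal{C}(S)$ is too small or disconnected, or there is a central hyperelliptic involution, and these algebraic fingerprints of Dehn twists cease to be unambiguous.

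Granting that $\phi$ induces $\phi_* \in \mathrm{Aut}(\mathcal{C}(S))$, I would invoke Ivanov's rigidity theorem for the complex of curves: away from the excluded surfaces every automorphism of $\mathcal{C}(S)$ is geometric, induced by some $h \in \mathrm{Mod}^*(S)$. Replacing $\phi$ by its composition with conjugation by $h^{-1}$ yields an automorphism $\psi$ fixing every isotopy class of curve, hence $\psi(T_a) = T_a^{\varepsilon(a)}$ with $\varepsilon(a) \in \{\pm 1\}$. Feeding a pair with $i(a,b)=1$ into the braid relation forces $\varepsilon(a) = \varepsilon(b)$; since curves of geometric intersection one form a connected graph, $\varepsilon$ is constant, and the value $-1$ is excluded (e.g. because $T_a$ and $T_a^{-1}$ are not conjugate while $\psi$ preserves conjugacy classes), so $\varepsilon \equiv 1$ and $\psi$ fixes the generating set $\{T_a\}$ pointwise, whence $\psi = \mathrm{id}$. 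Therefore $\phi$ is conjugation by $h$, i.e. the restriction of an inner automorphism of $\mathrm{Mod}^*(S)$.

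Finally I would read off the outer automorphism groups. Every automorphism of $\mathrm{Mod}^*(S)$ is inner, and the center of $\mathrm{Mod}^*(S)$ is trivial for these surfaces, so $\mathrm{Out}(\mathrm{Mod}^*(S)) = 1$. For $\mathrm{Mod}(S)$: conjugation by a fixed orientation-reversing class is an automorphism of $\mathrm{Mod}(S)$ that is not inner (otherwise composing with the conjugator would give a nontrivial central element), so it represents a nontrivial class in $\mathrm{Out}(\mathrm{Mod}(S))$; and every automorphism of $\mathrm{Mod}(S)$ extends to $\mathrm{Mod}^*(S)$ by the same curve-complex argument, so $\mathrm{Out}(\mathrm{Mod}(S))$ is generated by that class and has order two, $\mathrm{Out}(\mathrm{Mod}(S)) = \mathbb{Z}/2\mathbb{Z}$. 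The identical scheme, with $\mathrm{PMod}$ in place of $\mathrm{Mod}$, handles the pure versions. The genuine obstacle is Step 1 together with the rigidity of $\mathcal{C}(S)$ (which I would cite rather than reprove); the orientation bookkeeping and the passage to $\mathrm{Out}$ are then formal.
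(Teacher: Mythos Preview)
The paper does not prove this theorem: it is stated in the introduction, attributed to Ivanov, and cited to \cite{I1} as motivation for the paper's own results on nonorientable surfaces. There is therefore no proof in the paper to compare your proposal against. That said, your outline is essentially Ivanov's original strategy, and it is precisely the template the paper adapts to the nonorientable setting in Sections~\ref{Dehn} and~\ref{OutAut}: an algebraic characterization of Dehn twists via centers of centralizers and ranks of maximal abelian subgroups (Theorems~\ref{Chr-1}--\ref{Chr-2.2}), followed by an argument that an automorphism permutes a generating configuration of twists geometrically.

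One point in your sketch needs tightening. Your exclusion of $\varepsilon\equiv -1$ via ``$T_a$ and $T_a^{-1}$ are not conjugate'' is valid in $\mathrm{Mod}(S)$ but fails in $\mathrm{Mod}^*(S)$, where any orientation-reversing class conjugates $T_a$ to $T_a^{-1}$. The correct bookkeeping is not to exclude $\varepsilon=-1$ but to absorb it: if $\varepsilon\equiv -1$, replace $h$ by $h$ composed with an orientation-reversing element, after which $\psi$ fixes every $T_a$. Either way $\phi$ is the restriction of conjugation by an element of $\mathrm{Mod}^*(S)$, which is exactly the assertion; and the residual $\pm 1$ ambiguity, realized by an orientation-reversing class that is \emph{not} inner in $\mathrm{Mod}(S)$, is what yields $\mathrm{Out}(\mathrm{Mod}(S))=\mathbb{Z}/2\mathbb{Z}$.
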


In the next section we study the orientation double cover of a
nonorientable surface and we investigate relation between mapping
class groups of the nonorientable surface and its orientation double
cover. Furthermore, we describe maximal abelian subgroups of the mapping
class group of nonorientable surfaces and construct some special type of
abelian subgroup of the mapping class group of nonorientable surface.

In Section~\ref{Dehn}  we characterize the Dehn twists about circles on a
closed nonorientable surface using the ideas of Ivanov's analogous
work on the orientable surface \cite{I1}.

In Section~\ref{OutAut}, we prove that the outer automorphism of
$ {\rm {Mod}}(N_{2g+1})$ injects into $\mathbb{Z}$ if $g \geq 2$, on the
other hand, the outer automorphism of ${\rm {Mod}}(N_{2g})$ injects
into the mapping class group of sphere with 4 holes if $g \geq 3$.

Finally, we compute the outer automorphism of mapping class groups
of some sporadic cases using the presentations of mapping class groups
of some nonorientable surfaces not covered in our theorems.

\section{Preliminaries and Notations}\label{Prelim}

Let $\Sigma_{g,n}$ and $N_{g,n}$ denote the connected orientable surface
of genus $g$ with $n$ holes and the connected nonorientable surface of
genus $g$ with $n$ holes, respectively. Recall that the genus of a
nonorientable surface is the maximum number of projective planes in
a connected sum decomposition.

\subsection{The orientation double cover of $N_{g,n}$} Let
$p: \Sigma_{g-1,2n} \rightarrow N_{g,n}$ be the orientation double cover of
the nonorientable surface $N_{g,n}$ with the Deck transformation
$\sigma:\Sigma_{g-1,2n}\rightarrow \Sigma_{g-1,2n}$. Then $p_{*}(\pi_{1}(\Sigma_{g-1,2n}, x))$
is a normal subgroup of index $2$ in $\pi_{1}(N_{g,n}, p(x))$ and the deck
transformation group is $\langle \sigma \mid \sigma^{2}=identity
\rangle \cong {\mathbb Z}_2$. If $F'$ is a diffeomorphism of $\Sigma_{g-1,2n}$ such
that $\sigma \circ F' = F' \circ \sigma$, then $F'$ induces a diffeomorphism $F$ on
$N_{g,n}$.

Let $Diff(\Sigma_{g-1,2n})^{\langle \sigma \rangle} = \{ F' \in Diff(\Sigma_{g-1,2n})
\mid F'=\sigma \circ F' \circ \sigma^{-1} \}$. So, we have a
homomorphism $Diff(\Sigma_{g-1,2n})^{\langle \sigma \rangle}
\rightarrow Diff(N_{g,n})$. Moreover, this homomorphism is onto. Indeed, we
have an exact sequence $$0 \rightarrow \mathbb{Z}_{2} \rightarrow
Diff(\Sigma_{g-1,2n})^{\langle \sigma \rangle} \rightarrow
Diff(N_{g,n}) \rightarrow 0.$$ On the other hand, any diffeomorphism $F$ of the
nonorientable surface $N_{g,n}$ has a unique lift to an orientation preserving diffeomorphism
$F':\Sigma_{g-1,2n}\rightarrow \Sigma_{g-1,2n}$. Hence, the above exact
sequence splits and therefore we can identify $Diff(N_{g,n})$ with the subgroup
$Diff_+(\Sigma_{g-1,2n})^{\langle\sigma \rangle}$, the subgroup of orientation
preserving diffeomorphisms of $\Sigma_{g-1,2n}$ commuting with the deck transformation $\sigma$.

Indeed, any isotopy of the nonorientable surface $N_{g,n}$ lifts to the surface
$\Sigma_{g-1,2n}$. Moreover, if the Euler characteristic of
the surface is negative (hence it carries a complete hyperbolic metric) then any isotopy of
orientation preserving diffeomorphisms of $\Sigma_{g-1,2n}$, whose end maps commute with the deck
transformation $\sigma$, can be deformed into one which commutes with $\sigma$, and
hence descends to an isotopy of $Diff(N_{g,n})$ (c.f. Lemma 5 in \cite{Y}).  Therefore, we can identify
the mapping class group ${\rm {Mod}}(N_{g,n})$ as the subgroup
${\rm {Mod}}(\Sigma_{g-1,2n})^{\langle\sigma \rangle}$, the subgroup of orientation preserving
mapping classes of $\Sigma_{g-1,2n}$ commuting with the mapping class of the deck transformation $\sigma$.

\subsection{Circles}
If $a$ is a circle on $N_{g,n}$, by which we mean a simple closed
curve, then according to whether a regular neighborhood of $a$ is an
annulus or a M\"obius strip, we call $a$ two-sided or one-sided
simple closed curve, respectively.

We say that a circle is nontrivial if it bounds neither a disc with
at most one puncture nor annulus together with a boundary component,
nor a M\"obius band on $N_{g,n}$.

If $a$ is a circle, then we denote the complement of $a$ by
$N_{g,n}^{a}$ the surface obtained by cutting $N_{g,n}$ along $a$. A
circle $a$ is called nonseparating if $N_{g,n}^{a}$ is connected and
otherwise, it is called separating.

Following Ivanov \cite{I1}, we will call a one dimensional submanifold
$\mathcal{C}$ of a surface $S$ a system of circles if $\mathcal{C}$ consists of
several pairwise nonisotopic circles. By $S^\mathcal{C}$ we will denote the
surface $S$ cut along all the circles in the system $\mathcal{C}$.

\begin{Definition}
A finite sequence of embedded two-sided circles $\{c_{1},\cdots,c_{k}\}$ on
the any surface is called a chain if the geometric
intersection number of the
isotopy classes $\gamma_{i}$ and $\gamma_{j}$, satisfies
$i(\gamma_{i},\gamma_{j})=\delta_{i+1,j}$, for all $i,j$,
where the geometric intersection number $i(\gamma_{i},\gamma_{j})$
defined to be the infimum of the cardinality of $d_{i} \cap d_{j}$ with
$d_{i} \in \gamma_{i}=[c_i]$, $d_{j} \in \gamma_{j}=[c_j]$.
\end{Definition}

\subsection{Periodic, reducible, and pseudo-Anosov elements}
For an orientable surface $\Sigma_{g,n}$ we have the classical definition below.
\begin{Definition}
Let $F:\Sigma_{g,n} \to \Sigma_{g,n}$ be any diffeomorphism.

\hspace*{0.4cm} $\bullet$ $F$ is called periodic if it is isotopic to some
diffeomorphism $\tilde{F}$  with $\tilde{F}^k = id$ for
some $k \neq 0$,

\hspace*{0.4cm} $\bullet$ $F$ is called reducible if there is a
system of circles $\mathcal{C} \neq \emptyset$ on $\Sigma_{g,n}$ and a
diffeomorphism $\tilde{F}$, isotopic to $F$, with
$\tilde{F}(\mathcal{C})=\mathcal{C}$,

\hspace*{0.4cm} $\bullet$ otherwise, $F$ is called pseudo-Anosov.

If $F$ is periodic, or reducible or pseudo-Anosov, its isotopy class
$f$ is called periodic element, reducible element, pseudo-Anosov
element, respectively.
\end{Definition}

It has been shown in \cite {Y} that one can define periodic, reducible
and pseudo-Anosov diffeomorphisms for nonorientable surfaces exactly
in the same way.  Moreover, if $F$ is a diffeomorphism of the
nonorientable surface $N_{g,n}$ with its unique orientation preserving
lift $F'$ to the orientation double cover $\Sigma_{g-1,2n}$, then $F$ is periodic,
reducible or pseudo-Anosov if and only if $F'$ is periodic, reducible or
pseudo-Anosov, respectively.

\subsection{Pure elements}
\begin{Definition}
A diffeomorphism $F : N_{g,n} \to N_{g,n} $ is called pure if for
some system of circles $\mathcal{C}$ the following condition is satisfied:
All points of $\mathcal{C}$ and $\partial N_{g,n}$
are fixed by  $F$, $F$ does not interchange components of $N_{g,n}^\mathcal{C}$
and it induces on every component of $N_{g,n}^\mathcal{C}$ a diffeomorphism,
which is either isotopic to a pseudo-Anosov one, or to the identity.
\end{Definition}

\subsection{Abelian subgroups of $\Gamma'(m)$}
Let $\Gamma(m)$, where $m \in \mathbb{Z}$, $m>1$, be the kernel of the
natural homomorphism $${\rm Mod}(\Sigma_{g-1,2n}) \rightarrow {\rm Aut}
\,(H_{1}(\Sigma_{g-1,2n}, \mathbb{Z} / m \mathbb{Z})).$$ Then
$\Gamma(m)$ is a subgroup of finite index in ${\rm
Mod}(\Sigma_{g-1,2n})$. Let $\Gamma'(m)=\Gamma(m) \cap {\rm
Mod}(N_{g,n})$, regarding ${\rm Mod}(N_{g,n})$ as a subgroup of ${\rm
Mod}(\Sigma_{g-1,2n})$ as described above.

It is well known that $\Gamma(m)$ and hence $\Gamma'(m)$ consist of pure
elements only provided that $m>2$, (\cite{I1}). Throughout the paper, we will
consider $\Gamma(m)$ for only $m>2$.

Following Ivanov, suppose that $\mathcal{C}$ is a system of circles on
$N_{g,n}$ and a diffeomorphism $\bar{F_{P}}: P \to P$ is fixed on each component
$P$ of $N_{g,n}^\mathcal{C}$. Moreover, suppose that $\bar{F_{P}}$ is
fixed at all points of the boundary of $P$ and is either isotopic to the
identity or to a pseudo-Anosov map on $P$. We extend $\bar{F_{P}}$ to $F_{P}: N_{g,n}
\to N_{g,n}$ by the identity. Let the subgroup $\Lambda$ of ${\rm
Mod}(N_{g,n})$ be generated by $f_{P}$'s, which are the isotopy classes of
such $F_{P}$'s, and all Dehn twists about two-sided circles in $\mathcal{C}$. The
subgroup $\Lambda$ is abelian. Indeed, in the orientable surface case, Ivanov showed
that any maximal abelian subgroup of $\Gamma(m)$ is of this type (\cite{I1}). The
analogous result holds in the nonorientable case also:

\begin{Lemma}\label{lambdasubgroup1}
Every abelian subgroup of  \ $\Gamma'(m)$ is contained in some
abelian subgroup constructed as in the above paragraph.
\end{Lemma}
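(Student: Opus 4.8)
The plan is to lift everything to the orientation double cover $\Sigma=\Sigma_{g-1,2n}$, invoke Ivanov's classification of maximal abelian subgroups of $\Gamma(m)$ there, and then push the resulting reduction system and pieces back down to $N_{g,n}$ via the identification ${\rm Mod}(N_{g,n})={\rm Mod}(\Sigma)^{\langle\sigma\rangle}$. So let $A$ be an abelian subgroup of $\Gamma'(m)=\Gamma(m)\cap{\rm Mod}(N_{g,n})$. Viewing $A$ as an abelian subgroup of $\Gamma(m)$, which consists of pure mapping classes since $m>2$, Ivanov's theorem in \cite{I1} provides a system of circles $\mathcal{C}'$ on $\Sigma$ — for instance the union of the canonical reduction systems of the elements of $A$ — together with one piece $g_{Q'}$ for each component $Q'$ of $\Sigma^{\mathcal{C}'}$, isotopic to the identity or to a pseudo-Anosov map supported on $Q'$, such that $A$ is contained in the abelian subgroup $\Lambda_\Sigma(\mathcal{C}')$ of $\Gamma(m)$ generated by the Dehn twists $t_{\tilde c}$ ($\tilde c\in\mathcal{C}'$) and the $g_{Q'}$. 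This subgroup is free abelian on these generators, so every element of it has a unique ``canonical form''.

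First I would check that $\mathcal{C}'$ is $\sigma$-invariant. Since the canonical reduction system is natural under homeomorphisms and every $f\in A$ commutes with $\sigma$, one has $\sigma(\mathrm{CRS}(f))=\mathrm{CRS}(\sigma f\sigma^{-1})=\mathrm{CRS}(f)$ for each $f$, hence $\sigma(\mathcal{C}')=\mathcal{C}'$. As $\sigma$ is a free orientation-reversing involution, $\mathcal{C}'$ is a union of $\sigma$-orbits, and these project under $p$ to a system of circles $\mathcal{C}$ on $N_{g,n}$: an orbit $\{\tilde c,\sigma(\tilde c)\}$ of size two projects to a two-sided circle (whose annular neighbourhood lifts to a neighbourhood of $\tilde c$ and one of $\sigma(\tilde c)$), while a $\sigma$-invariant circle $\tilde\mu$, on which $\sigma$ restricts to the antipodal-type involution of $S^1$, projects two-to-one onto a one-sided circle. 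Using that lifts of curves and of isotopies are unique, one checks that the resulting circles are essential and pairwise non-isotopic, so $\mathcal{C}$ is a system of circles with $p^{-1}(\mathcal{C})=\mathcal{C}'$, and that the components of $\Sigma^{\mathcal{C}'}$ descend to honest components of $N_{g,n}^{\mathcal{C}}$ — a swapped pair $\{Q',\sigma(Q')\}$ projecting homeomorphically onto an orientable component, a $\sigma$-invariant $Q'$ double-covering a nonorientable one.

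Next I would identify the image of $A$. Fix $f\in A$ and write its canonical form $f=\prod_{\tilde c\in\mathcal{C}'}t_{\tilde c}^{n_{\tilde c}}\cdot\prod_{Q'}g_{Q'}^{m_{Q'}}$ in $\Lambda_\Sigma(\mathcal{C}')$. Conjugating by $\sigma$, using $\sigma f\sigma^{-1}=f$ together with $\sigma t_{\tilde c}\sigma^{-1}=t_{\sigma(\tilde c)}^{-1}$ (as $\sigma$ reverses orientation) and the fact that $\sigma$ permutes the components of $\Sigma^{\mathcal{C}'}$, the uniqueness of the canonical form forces $n_{\tilde c}=-n_{\sigma(\tilde c)}$ — in particular $n_{\tilde\mu}=0$ for every $\sigma$-invariant circle, which is the algebraic shadow of the impossibility of twisting along a one-sided curve — and it forces $g_{Q'}$ and $g_{\sigma(Q')}$ to correspond under conjugation by $\sigma$. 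Consequently the twisting part of $f$ is the lift of $\prod_c t_c^{n_c}$, with $c$ ranging over the two-sided circles of $\mathcal{C}$ and $t_c$ the Dehn twist along $c$ on $N_{g,n}$, whose lift is $t_{\tilde c}t_{\sigma(\tilde c)}^{-1}$; and the piece part of $f$ descends on each component of $N_{g,n}^{\mathcal{C}}$ to a diffeomorphism isotopic to the identity or, by the preliminaries' fact that pseudo-Anosovness is preserved under lifting, to a pseudo-Anosov map. Thus $f$ lies in the subgroup $\Lambda=\Lambda(\mathcal{C})$ of ${\rm Mod}(N_{g,n})$ built from $\mathcal{C}$ and these pieces exactly as in the paragraph preceding the lemma, and since $f$ was arbitrary, $A\subseteq\Lambda$.

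The main obstacle, and the only place requiring real care, is the bookkeeping at the interface between the two surfaces: matching the Dehn-twist generators of $\Lambda_\Sigma(\mathcal{C}')$ with those of $\Lambda(\mathcal{C})$ under the two-to-one and one-to-one behaviour of $p$ on circles, tracking the sign produced by $\sigma$ being orientation-reversing (which is exactly what kills the twists along one-sided curves), and verifying that a single choice of piece $\bar F_P$ on each component of $N_{g,n}^{\mathcal{C}}$ simultaneously accommodates every element of $A$ — this last point uses, as in the orientable case, that commuting pseudo-Anosov maps on a surface are powers of a common one. Confirming that the circles of $\mathcal{C}$ are essential and non-isotopic and that the complementary pieces descend correctly is then routine once the covering-space dictionary is in place.
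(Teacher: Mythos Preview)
Your proof is correct and follows the same route as the paper: lift the abelian subgroup to the double cover, take Ivanov's reduction system for it there, show that system is $\sigma$-invariant, and push it down to a system on $N_{g,n}$ giving the desired $\Lambda$. Your version is considerably more detailed---you make explicit the one-sided/two-sided dichotomy for the descended circles, the vanishing of twist exponents along $\sigma$-invariant curves via the canonical-form argument, and the descent of the pseudo-Anosov pieces---whereas the paper's proof records only the $\sigma$-invariance of the reduction system and then asserts the inclusion $K\subseteq\Lambda$; but the underlying strategy is identical.
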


\begin{proof}
Consider an abelian subgroup $K \leq \Gamma'(m)$. Regarding $K\leq \Gamma(m)$
let $\mathcal{C}$ be a reduction system for $K$ described by Ivanov for
orientable surfaces \cite{I1}. For any
$x \in C \subseteq \mathcal{C}$ and $f\in K$, we have
$f(\sigma(x))=\sigma(f(x))=\sigma(x)$, because each $f \in K$ is invariant
under the involution $\sigma: \Sigma_{g-1,2n} \to \Sigma_{g-1,2n}$.  So we have
$\sigma(\mathcal{C})=\mathcal{C}$.  Let $C\in \mathcal{C}$ be a circle in the system
with $\sigma(C)=C$. Since $\sigma$ has no fixed points,
the restriction of $\sigma$ to the circle $C$ is an orientation preserving
involution on that circle.  So each circle of the system $\mathcal{C}$
maps onto a circle in $N_{g,n}$ and hence the system $\mathcal{C}$ gives a system
$\mathcal{C}'$ on $N_{g,n}$. Now $K \subseteq \Lambda$, where $\Lambda$
is an abelian subgroup corresponding to the system of circles
$\mathcal{C}'$, as described in the above paragraph.
\end{proof}

The system of circles $\mathcal{C}'$ constructed in the above proof will be called
a reduction system for the abelian subgroup $K$.

Let $\Gamma$ be a subgroup of finite index of $\Gamma(m)$.
Then, the its $\sigma$ invariant part,
$\Gamma'=\{x \in \Gamma \mid \sigma x \sigma^{-1} = x \}$, is also
finite index subgroup of $\Gamma'(m)$.

\begin{Lemma}\label{lambdasubgroup2}
The center of every subgroup of finite index of \,$\Gamma'(m)$ is
trivial.
\end{Lemma}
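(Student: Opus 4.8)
The plan is to derive this from the orientable analogue together with the identification ${\rm Mod}(N_{g,n}) = {\rm Mod}(\Sigma_{g-1,2n})^{\langle\sigma\rangle}$ established above. Ivanov proved (in \cite{I1}) that every finite-index subgroup of $\Gamma(m)$ has trivial center, so the strategy is to transfer this fact across the double cover. Let $\Gamma'$ be a finite-index subgroup of $\Gamma'(m)$, and suppose $f$ lies in the center of $\Gamma'$. I would first enlarge $\Gamma'$ to a subgroup of $\Gamma(m)$ whose $\sigma$-invariant part contains $\Gamma'$: concretely, the normalizer or the subgroup generated by $\Gamma'$ inside $\Gamma(m)$ need not have finite index, so instead one should use a more robust device — pick a finite-index subgroup $\Gamma$ of $\Gamma(m)$ that is normalized by $\sigma$ (for instance $\Gamma(mk)$ for suitable $k$, or the intersection of the finitely many $\sigma$-conjugates of a given finite-index subgroup) and whose $\sigma$-invariant part $\Gamma' = \{x\in\Gamma\mid \sigma x\sigma^{-1}=x\}$ is exactly the given subgroup, as in the last paragraph before the statement.

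The heart of the argument is then: an element $f$ central in $\Gamma'$ commutes with every element of $\Gamma'$, and I want to conclude it commutes with enough of $\Gamma$ to force $f = 1$ by Ivanov's theorem. For this I would use the structure of abelian subgroups developed in Lemma~\ref{lambdasubgroup1}: if $f \neq 1$ is central in $\Gamma'$, then $\Gamma'$ is contained in the centralizer of $f$, and by the reduction-system analysis every such $f$ is a pure element supported on a $\sigma$-invariant system of circles $\mathcal{C}$ on $\Sigma_{g-1,2n}$. The commutation with all of $\Gamma'$ constrains $\mathcal{C}$ to be preserved, up to isotopy, by a finite-index subgroup; but a finite-index subgroup of $\Gamma'(m)$ (equivalently, the corresponding $\sigma$-invariant finite-index subgroup of ${\rm Mod}(\Sigma_{g-1,2n})$) cannot fix any nonempty system of isotopy classes of circles, nor any nontrivial canonical reduction data, because it still contains pseudo-Anosov elements on every piece and Dehn twists filling the surface. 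Hence $\mathcal{C} = \emptyset$, which means $f$ is itself (isotopic to) the identity on $\Sigma_{g-1,2n}$, i.e. $f = 1$.

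An alternative and perhaps cleaner route, which I would actually pursue first: show directly that the center of $\Gamma'$ injects into the center of a finite-index subgroup of $\Gamma(m)$. Since $\Gamma'(m) \leq {\rm Mod}(\Sigma_{g-1,2n})$ and $\Gamma'$ has finite index in $\Gamma'(m)$, the subgroup $\Gamma'$ is itself a subgroup (of infinite index in general) of ${\rm Mod}(\Sigma_{g-1,2n})$; but one knows that $\Gamma'(m)$ contains pseudo-Anosov elements whose centralizers in ${\rm Mod}(\Sigma_{g-1,2n})$ are virtually cyclic, and an element central in a finite-index subgroup of $\Gamma'(m)$ must commute with such a pseudo-Anosov, hence lies in a virtually cyclic group consisting (in $\Gamma(m)$, where all elements are pure) only of powers of that pseudo-Anosov and the identity; commuting with a \emph{second}, independent pseudo-Anosov in $\Gamma'$ then forces $f=1$. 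The one thing to check carefully is that $\Gamma'(m)$, and every finite-index subgroup of it, genuinely contains two independent pseudo-Anosov mapping classes of $N_{g,n}$ — this follows from the existence of pseudo-Anosov diffeomorphisms of nonorientable surfaces (\cite{Y}) together with the fact that their lifts are pseudo-Anosov on $\Sigma_{g-1,2n}$, so suitable powers land in any prescribed finite-index subgroup.

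The main obstacle I anticipate is the bookkeeping around finite index and $\sigma$-invariance: passing between a finite-index subgroup of $\Gamma'(m)$ and a $\sigma$-invariant finite-index subgroup of $\Gamma(m)$ with the right invariant part requires some care, and one must make sure that the pseudo-Anosov (or reduction-system) witnesses can be chosen inside the given finite-index subgroup rather than merely inside $\Gamma'(m)$. Once that is set up, invoking Ivanov's centralizer computations \cite{I1} and the pure-element structure finishes the proof.
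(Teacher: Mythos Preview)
Your alternative route---using that the centralizer in $\Gamma(m)$ (hence in $\Gamma'(m)$) of a pseudo-Anosov element is isomorphic to $\mathbb{Z}$, and then that a central element would have to commute with two independent pseudo-Anosovs---is exactly the paper's argument, though the paper compresses it into three sentences and leaves the ``two independent pseudo-Anosovs'' step implicit. Your first approach via canonical reduction systems and Lemma~\ref{lambdasubgroup1} is a genuinely different path: it would also work, and has the virtue of not needing the existence of pseudo-Anosov elements in every finite-index subgroup as a separate input, but it is heavier machinery for what the paper treats as a one-line consequence of Ivanov's centralizer computation. The bookkeeping concerns you raise about passing between finite-index subgroups of $\Gamma'(m)$ and $\sigma$-invariant finite-index subgroups of $\Gamma(m)$ are real but do not arise in the paper's version, since it works directly with centralizers inside $\Gamma'(m)$ rather than lifting to $\Gamma(m)$.
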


\begin{proof}
The centralizer in $\Gamma(m)$ of a pseudo-Anosov element is
isomorphic to $\mathbb{Z}$, (\cite{I1}). Therefore, the centralizer
in $\Gamma'(m)$ of a pseudo-Anosov element is also isomorphic to
$\mathbb{Z}$. From this, we obtain that the center of every subgroup
of finite index of  $\Gamma'(m)$ is trivial.
\end{proof}

Stukow's result ~\cite{SM} on Dehn twists on nonorientable surfaces
shows that Dehn twists on nonorientable surfaces enjoy the same
properties with those on orientable surfaces. For instance, two Dehn twists commute
if and only if their circles can be chosen disjoint; they satisfy the braid relation if
and only if their circles meet transversally at one point.

Before we go further we give the following remark.

\begin{Remark}
{\bf a)} The pure mapping class group of real projective plane with less than
or equal to two holes is finite.\\
{\bf b)} The pure mapping class group of the Klein bottle is finite and that of the
Klein bottle with one hole has no pseudo-Anosov element (see Proposition~\ref{sporadic cases}).\\
{\bf c)} The pure mapping class group of Klein bottle with two holes contains
$\mathbb{Z}\bigoplus\mathbb{Z}$.\\
{\bf d)} The pure mapping class group of $N_{3,n}$ \ ($n\geq 0$)  contains
a copy of $\mathbb{Z}\bigoplus\mathbb{Z}$, though that of $N_3$ does
not support any pseudo-Anosov diffeomorphism (\cite{P}).
\end{Remark}

Using the above remark we deduce that the smallest surfaces which
support a pseudo-Anosov diffeomorphism in their pure
mapping class group which does not have an abelian subgroup of rank
two are $\Sigma_{0,4}$, $\Sigma_1$, $\Sigma_{1,1}$ or
$N_{1,3}$. Examples of pseudo-Anosov mappings on the surfaces
$\Sigma_{0,4}$, $\Sigma_1$ and $\Sigma_{1,1}$ are well known.
For the nonorientable surface $N_{1,3}$, the map $t_at_bt_c$, described
in the Figure~\ref{rp2-3bc}, is indeed pseudo-Anosov (Theorem 4.1 in
\cite{P}).

\begin{figure}[hbt]
  \begin{center}
    \includegraphics[width=5cm]{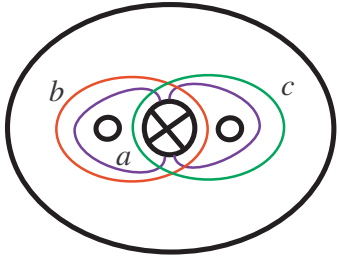}
    \caption{The real projective plane with three boundary components}
    \label{rp2-3bc}
  \end{center}
 \end{figure}

We will finish this section with the following result, whose proof will
be omitted.

\begin{Lemma}Let $K < \Gamma'(m)$ be a maximal abelian subgroup of $N_g$
of highest rank, where $g$ is at least three. Then the
$\mbox{rank} \ (K)=\frac{3g-r}{2}-3$ and each component of $N_g^{\mathcal{C}}$
is either sphere with three holes, sphere with four holes, torus with one hole,
or real projective plane with two or three holes, where $\mathcal{C}$ is a
minimal reduction system for $K$ and $r$ is $1$ or $0$ depending on whether
$g$ is odd or even.

Moreover, in both even and odd genus cases, one can choose maximal abelian
subgroups of highest rank so that they are generated by Dehn twists about
nonseparating circles.
\end{Lemma}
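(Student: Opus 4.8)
The plan is to convert the statement into a counting problem over a pants-type decomposition of $N_g$, following Ivanov's analysis of maximal abelian subgroups in the orientable case. First I would use Lemma~\ref{lambdasubgroup1}: since $K$ is abelian of highest rank in $\Gamma'(m)$ and sits inside a subgroup $\Lambda$ of the type constructed just before that lemma, maximality of $K$ forces $K=\Lambda$, and we may take the associated system of circles $\mathcal{C}$ on $N_g$ to be minimal. Writing $c_2$ and $c_1$ for the numbers of two-sided and one-sided circles of $\mathcal{C}$ and letting $P$ range over the components of $N_g^{\mathcal{C}}$, we have $\mathrm{rank}(K)=c_2+\sum_P\rho(P)$, where $\rho(P)=1$ when the chosen diffeomorphism on $P$ is pseudo-Anosov and $\rho(P)=0$ when it is the identity; that $\rho(P)\le 1$ is Lemma~\ref{lambdasubgroup2} (the centralizer of a pseudo-Anosov is infinite cyclic), and independence of the Dehn twists $t_a$ for two-sided $a\in\mathcal{C}$ together with the $f_P$'s follows from Ivanov's orientable picture via the embedding $\mathrm{Mod}(N_g)\hookrightarrow\mathrm{Mod}(\Sigma_{g-1})$ and Stukow's results that such twists have infinite order.

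Second, I would prove that minimality of $\mathcal{C}$ and maximality of $\mathrm{rank}(K)$ force every component $P$ of $N_g^{\mathcal{C}}$ to lie in $\{\Sigma_{0,3},\,\Sigma_{0,4},\,\Sigma_{1,1},\,N_{1,2},\,N_{1,3}\}$. The mechanism is that no $P$ may be cut along a nontrivial two-sided circle, or an essential one-sided circle, without either strictly raising $c_2+\sum\rho$ (contradicting maximality of the rank) or producing a strictly finer system still supporting $K$ (contradicting minimality of $\mathcal{C}$). A component with $\rho(P)=1$ admitting no such refinement is, by the observation following the Remark above, one of $\Sigma_{0,4},\Sigma_1,\Sigma_{1,1},N_{1,3}$; the closed torus $\Sigma_1$ is excluded since $N_g$ is connected. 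A component with $\rho(P)=0$ admitting no refinement must have pure mapping class group of rank zero and carry no nonseparating two-sided circle and no essential one-sided circle along which cutting preserves $\sum\rho$ while gaining a twist, and running through $\Sigma_{0,5},\Sigma_{1,2},N_{2,1},N_{2,2},N_{1,4},\dots$ leaves only $\Sigma_{0,3}$ and $N_{1,2}$. The genuinely delicate point is that a M\"obius band $N_{1,1}$ never appears as a component: its boundary circle would bound a M\"obius band in $N_g$ and hence be trivial, contradicting that $\mathcal{C}$ consists of nontrivial circles.

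Third comes the Euler-characteristic bookkeeping. Let $x_3,x_4,y,z_2,z_3$ count the components equal to $\Sigma_{0,3},\Sigma_{0,4},\Sigma_{1,1},N_{1,2},N_{1,3}$. Additivity of $\chi$ along $\mathcal{C}$ gives $x_3+2x_4+y+z_2+2z_3=g-2$, while counting the boundary circles of $N_g^{\mathcal{C}}$ (glued in $c_2$ pairs by the two-sided circles and capped $c_1$ times by the one-sided circles) gives $3x_3+4x_4+y+2z_2+3z_3=2c_2+c_1$. Feeding $c_2$ from the second relation into $\mathrm{rank}(K)=c_2+x_4+y+z_3$ and reducing by the first yields
\[
\mathrm{rank}(K)=\frac{3g}{2}-3-\frac{z_2+z_3+c_1}{2}.
\]
As the left-hand side is an integer, $z_2+z_3+c_1\equiv g\pmod 2$, so it is $\ge 0$ for $g$ even and $\ge 1$ for $g$ odd; hence $\mathrm{rank}(K)\le\frac{3g-r}{2}-3$, with equality precisely when $z_2+z_3+c_1$ is minimal for the parity of $g$.

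Finally, for the reverse inequality and the last assertion of the Lemma I would exhibit, for each parity of $g$, a maximal system $\mathcal{C}$ attaining the bound with every component a pair of pants. Modelling $N_g$ as an orientable surface with one crosscap attached when $g$ is odd and two crosscaps attached when $g$ is even, I would build a pants decomposition of $N_g$ whose curves consist of $\frac{3g-r}{2}-3$ pairwise disjoint nonisotopic \emph{nonseparating} two-sided circles together with $r$ one-sided circles (the latter running near the crosscaps), and take $K$ to be the group generated by the Dehn twists about the two-sided circles; since the system is a complete reduction system, $K$ is a maximal abelian subgroup, it has rank $\frac{3g-r}{2}-3$ once independence of the twists is checked (after passing to a suitable power, via the action on $H_1(N_g;\mathbb{Z}/m)$ as in the orientable case), and it is generated by twists about nonseparating circles. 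The step I expect to be the main obstacle is the second one: the surface-by-surface verification that no component other than the five listed can survive in a minimal rank-maximizing system, together with the exclusion of M\"obius-band components; the explicit nonseparating pants decomposition in the last step also needs care, particularly in matching the parity of $g$.
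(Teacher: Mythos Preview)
The paper explicitly omits the proof of this Lemma (``whose proof will be omitted''), so there is no argument in the text to compare your proposal against. Your approach---passing to a $\Lambda$-type subgroup via Lemma~\ref{lambdasubgroup1}, constraining the pieces of $N_g^{\mathcal{C}}$ using the Remark on small nonorientable surfaces, and then doing the Euler-characteristic and boundary bookkeeping---is precisely the natural adaptation of Ivanov's orientable argument and is almost certainly what the author had in mind as the ``omitted'' proof.

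Your outline is correct and the computation $\mathrm{rank}(K)=\tfrac{3g}{2}-3-\tfrac{z_2+z_3+c_1}{2}$ together with the parity observation is the heart of the matter. Two points deserve tightening. First, in excluding a M\"obius-band component $P=N_{1,1}$ you only treat the case where $\partial P$ comes from a two-sided circle of $\mathcal{C}$; you should also dispose of the case where $\partial P$ arises from cutting along a one-sided $c\in\mathcal{C}$, since then $P$ together with the M\"obius neighbourhood of $c$ would be a closed Klein bottle and hence all of $N_g$, contradicting $g\ge 3$ (the analogous observation rules out annulus components). Second, since $K<\Gamma'(m)$, in your explicit construction the generators must be suitable powers $t_{a_i}^{k}$ lying in $\Gamma'(m)$ rather than the twists themselves; this does not change the rank, but the independence check you invoke via homology is really carried out on the double cover $\Sigma_{g-1}$, which is where $\Gamma(m)$ is defined. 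With these two details filled in, your argument is complete.
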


\bigskip
\section{Characterization of Dehn twists}\label{Dehn}

\subsection{Algebraic characterization of Dehn twists}

Throughout this section $\Gamma'$ is a finite index subgroup of $\Gamma'(m)$, $m>2$.

\begin{Theorem}\label{Chr-1}
Let $N_g$ be a compact closed connected nonorientable surface
of odd (respectively, even) genus $g \geq 5$ (respectively, $g \geq 6$). An element $f \in {\rm
Mod}(N_g)$ is a Dehn twist about a nonseparating circle (respectively, nonseparating
circle whose complement is nonorientable) if and only if the following conditions are satisfied:\\
\begin{itemize}

\item[i)] $C(C_{\Gamma'}(f^{n}))$ is isomorphic to
$\mathbb{Z}$, for any integer $n \neq 0$ such that $f^{n} \in
\Gamma'$.

\item[ii)] There is an abelian subgroup $K$ of rank
$\displaystyle\frac{3g-7}{2}$ (respectively,
$\displaystyle\frac{3g-8}{2}$) generated by $f$ and its conjugates freely.

\item[iii)] $f$ is a primitive element of $C_{{\rm Mod}(N_g)}(K)$.
\end{itemize}
\end{Theorem}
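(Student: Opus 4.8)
The plan is to follow Ivanov's strategy for orientable surfaces (\cite{I1}), adapting each step to the nonorientable setting via the identification ${\rm Mod}(N_g) = {\rm Mod}(\Sigma_{g-1})^{\langle\sigma\rangle}$ and Lemmas~\ref{lambdasubgroup1} and \ref{lambdasubgroup2}. First I would prove the \emph{necessity} of (i)--(iii). For a Dehn twist $f = t_a$ about a nonseparating two-sided circle $a$ (with $N_g^a$ nonorientable in the even case), the centralizer $C_{\Gamma'}(f^n)$ consists, up to finite index, of mapping classes supported on $N_g^a$ together with powers of $t_a$; cutting along $a$ reduces the computation of $C(C_{\Gamma'}(f^n))$ to the center of a finite-index subgroup of the mapping class group of $N_g^a$ extended by $\langle t_a\rangle$. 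Using Stukow's description of commuting/braiding Dehn twists and Lemma~\ref{lambdasubgroup2} (triviality of centers of finite-index subgroups of $\Gamma'(m)$), this double centralizer collapses to $\langle t_a\rangle \cong \mathbb{Z}$, giving (i). For (ii), I would exhibit an explicit pants-type decomposition of $N_g$ realizing the maximal abelian rank from the last Lemma of Section~2 — namely $\tfrac{3g-r}{2}-3$ — in which $a$ is one of the nonseparating curves, and whose twist subgroup is freely abelian of the stated rank $\tfrac{3g-7}{2}$ (odd $g$, $r=1$) or $\tfrac{3g-8}{2}$ (even $g$, $r=0$); here one uses that maximal abelian subgroups of highest rank can be chosen generated by Dehn twists about nonseparating circles. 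Condition (iii) is immediate: $t_a$ generates a $\mathbb{Z}$ direct summand corresponding to the curve $a$ inside the abelian group $C_{{\rm Mod}(N_g)}(K)$, hence is primitive.

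The substance is the \emph{sufficiency} direction: assume $f$ satisfies (i)--(iii) and deduce that $f$ is a Dehn twist of the asserted type. Pass to a power $f^n \in \Gamma'(m)$, which is pure, and let $\mathcal{C}$ be a reduction system for $f^n$ as in Lemma~\ref{lambdasubgroup1}, so $f^n$ lies in the abelian subgroup $\Lambda$ associated to $\mathcal{C}$ — a product of Dehn twists about the two-sided curves of $\mathcal{C}$ and pseudo-Anosov (or identity) pieces on the components of $N_g^{\mathcal{C}}$. Condition (i), that $C(C_{\Gamma'}(f^n)) \cong \mathbb{Z}$, forces $f^n$ to have a \emph{one-dimensional} ``support'': I would argue that if $f^n$ had either two independent twist/pseudo-Anosov coordinates, or a pseudo-Anosov coordinate on a component that itself admits a rank-$\ge 2$ abelian subgroup, then $C(C_{\Gamma'}(f^n))$ would contain a $\mathbb{Z}^2$, contradicting (i). This is exactly the point where the list of ``small'' surfaces ($\Sigma_{0,4}, \Sigma_1, \Sigma_{1,1}, N_{1,3}$) supporting a pseudo-Anosov with no rank-2 abelian subgroup becomes relevant: one must separately rule out the possibility that $f^n$ is a pseudo-Anosov on a single such small piece embedded in $N_g$. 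I would eliminate those via (ii): a pseudo-Anosov piece on $\Sigma_{0,4}$, $\Sigma_1$, $\Sigma_{1,1}$ or $N_{1,3}$ sitting inside $N_g$ ($g\ge 5$) cannot be built from $f$ and its conjugates into a \emph{free} abelian group of rank $\tfrac{3g-7}{2}$ or $\tfrac{3g-8}{2}$, because the commensurated ``curve system'' of such an $f^n$ does not admit enough disjoint conjugates — a rank count using the last Lemma of Section~2 gives the contradiction. Hence $f^n$ is a power of a single Dehn twist $t_a^k$.

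It remains to pin down that $a$ is nonseparating (with nonorientable complement when $g$ is even) and that $f$ itself — not merely $f^n$ — is $t_a^{\pm 1}$. If $a$ were separating, or nonseparating with orientable complement in the even case, then $C_{{\rm Mod}(N_g)}(K)$ would be strictly larger than what (iii) allows, or the maximal rank in (ii) could not be attained; I would make this precise by comparing $\tfrac{3g-7}{2}$ (resp. $\tfrac{3g-8}{2}$) against the maximal twist rank of a decomposition in which $a$ is separating, which is strictly smaller. Finally, from $f^n = t_a^k$ and the fact that $C_{{\rm Mod}(N_g)}(t_a)$ modulo its torsion is torsion-free with $\langle t_a\rangle$ a direct summand, primitivity of $f$ in $C_{{\rm Mod}(N_g)}(K)$ (condition (iii)) upgrades $f^n = t_a^k$ to $f = t_a^{\pm 1}$. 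The main obstacle I anticipate is the rank bookkeeping in the sufficiency direction — precisely controlling, in the nonorientable setting, how a pseudo-Anosov or multi-twist ``core'' of $f^n$ limits the rank of a free abelian group generated by conjugates of $f$, so that conditions (i) and (ii) together genuinely exclude everything except a single nonseparating Dehn twist; Stukow's results and the classification of small pseudo-Anosov-supporting surfaces from Section~2 are the tools I would lean on most heavily there.
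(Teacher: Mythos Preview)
Your overall strategy---follow Ivanov, use Lemma~\ref{lambdasubgroup1} to reduce $f^n$ to a single piece via condition (i), then use (ii) and (iii) to pin down the curve and the primitivity---is the same as the paper's. However, two of the key steps in your sufficiency argument are not merely unfilled but point in the wrong direction.

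First, your plan for showing $a$ is nonseparating (``compare \ldots\ against the maximal twist rank of a decomposition in which $a$ is separating'') is not the right invariant. Condition (ii) does not give a decomposition containing $a$; it gives $s$ \emph{conjugates} of $t_a$, hence $s$ pairwise disjoint, pairwise nonisotopic circles $a_1,\dots,a_s$ each of the same topological type as $a$. The paper's argument is a direct Euler-characteristic count on these conjugates: if each $a_i$ separates $N_g$ into $N_1^i\cup N_2^i$ with $\chi(N_1^i)\ge\chi(N_2^i)$, one first shows that the ``small'' pieces $N_1^i$ are pairwise disjoint (because $N_1^i\cap N_1^j\neq\emptyset$ would force $N_1^i\cup N_1^j=N_g$ and then $\chi(N_g)>\chi(N_g)$), and then $2-g=\chi(N_g)\le s\chi_1\le -s$ yields $g\ge s+2$, contradicting $s=\frac{3g-7}{2}$ (resp.\ $\frac{3g-8}{2}$) for $g\ge 5$ (resp.\ $g\ge 6$). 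Nothing about ``rank of a decomposition containing $a$'' enters.

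Second, your upgrade from $f^n=t_a^k$ to $f=t_a^{\pm1}$ invokes properties of $C_{{\rm Mod}(N_g)}(t_a)$, but primitivity in (iii) is relative to $C_{{\rm Mod}(N_g)}(K)$, and you have not said why $f$ even lies in the subgroup generated by the $t_{a_i}$. The missing observation is that $s$ is the \emph{maximal} number of disjoint nonisotopic two-sided circles on $N_g$ (in the odd case; $s+1$ in the even case), so the complement $N_g^{\{a_1,\dots,a_s\}}$ consists of pairs of pants together with a single one-holed M\"obius band (resp.\ one four-holed sphere). Any $\psi\in C_{{\rm Mod}(N_g)}(K)$ fixes each $a_i$ and cannot permute these complementary pieces, hence is a product of the $t_{a_i}$'s; in particular $f$ itself is, and then free generation of $K$ plus primitivity gives $f=t_{a_1}$. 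Finally, in the even-genus case your proposal gives no mechanism for ``$N_g^a$ nonorientable''; the paper handles this by noting that if every $N_g^{a_i}$ were orientable then each $a_i$ is Poincar\'e dual to $\omega_1(N_g)$, so consecutive pairs $a_i,a_{i+1}$ cobound orientable subsurfaces of genus $\ge 1$, forcing $g\ge 2s+2>g$.
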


\begin{proof}

We will prove the odd genus case mainly following the proof of Ivanov in \cite{I1}:
The even genus case is basically the same with minor changes, which we will
emphasize in the when needed.

If the above conditions are satisfied, then we need to show that $f$
is a Dehn twist about a nonseparating circle.

Let us denote the number $\displaystyle\frac{3g-7}{2}$
($\displaystyle\frac{3g-8}{2}$, in the even genus case) by $s$.

If $f^{n}$ is equal to the identity, then $C_{\Gamma'}(f^{n}) =
\Gamma'$ and hence by the first condition, $C(\Gamma')$ is
isomorphic to $\mathbb{Z}$. However, this is a contradiction by
Lemma~\ref{lambdasubgroup2}. Therefore, $f^{n}$ is not equal to the
identity.

We will denote a minimal reduction system of $f^{n}$ by
$\mathcal{C'}$, and the subgroup generated by twists about two-sided
circles in $\mathcal{C'}$ by $G$. Set $G' = G \cap \Gamma'$. $G$ and
$G'$ are free abelian groups. First we will show that $G' \subset
C(C_{\Gamma'}(f^{n}))$. Let $g \in C_{\Gamma'}(f^{n})$. This implies
that $f^{n}$ and $g$ commute and hence $f^{n}$ and $g$ belong to
some abelian subgroup $\Lambda$ constructed as in the previous section.
Let $\mathcal{C}$ denote the system of
circles used in the construction of $\Lambda$. Therefore,
$\mathcal{C'}$ is isotopic to a subsystem of $\mathcal{C}$, because
$f^{n} \in \Lambda$. Thus, we have $G \subset \Lambda$. So, each
element of $G$ commutes with all $g \in \Lambda$. Therefore, we obtain
that $G \subseteq C_{{\rm Mod} (N_g)}(C_{\Gamma'}(f^{n}))$ and we
obtain that $G' \subset C_{\Gamma'}(C_{\Gamma'}(f^{n})) \subset
C(C_{\Gamma'}(f^{n}))$.

By the assumption that $C(C_{\Gamma'}(f^{n}))=\mathbb{Z}$, we have
$\mathcal{C'}$ has no or just one
two-sided circle. Suppose that $\mathcal{C'}$ has no two-sided
circle. Say $\mathcal{C'} = \{c_{1}, \cdots, c_{k} \}$, where
$c_{i}$ is a one-sided circle for all $i$. Then
$N_g^{\mathcal{C'}}$ is connected and the restriction $f_{\mid
N_g^{\mathcal{C'}}}$ is either the identity or a pseudo-Anosov.

If $f_{\mid N_g^{\mathcal{C'}}}$ is the identity, then $f$ must
be a product of Dehn twists about some circles in $\mathcal{C'}$
which is not possible since each $c_{i}$ is one-sided. Therefore, $f_{\mid
N_g^{\mathcal{C'}}}$ is a pseudo-Anosov. However, the maximal
abelian group containing $f$ has rank one. By the assumption $g\geq 5$ and thus
the rank is $s\geq 4 > 1$. This is a contradiction.  (Similarly, in the even genus
case, we have $s\geq 5 > 1$.) So, $\mathcal{C'}$ has exactly one two-sided circle
and so $\mathcal{C'}= \{c_{1}, \cdots, c_{k}, a\}$,
where $a$ is a two-sided circle and each $c_i$ is one-sided.

Let  $D$ be the group generated by $f^{n}$ and the twist about $a$ and
set $D' = D \cap \Gamma'$. Then, we have $D' \subset
C(C_{\Gamma'}(f^{n}))$ and hence $D'$ is isomorphic
to $\mathbb{Z}$. It follows that there are no pseudo-Anosov
diffeomorphisms in the Thurston normal form of $f^{n}$ and thus $f^{n}$
is a power of the Dehn twist $t_{a}$ about $a$. So, $f^{n} = t_{a}^{m}$
for some integer $m$.

Now we need to show that $a$ is nonseparating. We assume
that $a$ is a separating circle. So, $N_g^{a} = N_{1} \cup
N_{2}$ such that $\chi_{j}=\chi(N_{j})$ and
$\chi_{1}+\chi_{2}=\chi(N_g)$. Since $a$ is nontrivial we have
$\chi_{j} < 0$ for $j=1,2$. Without loss of generality, assume
$\chi_{1} \geq \chi_{2}$.

Let $f_{1}, \cdots, f_s$ be the elements conjugate to $f$
generating the abelian subgroup $K$ in the statement of the theorem.
Then $f^{n}_{i} = t_{a_{i}}^{m}$, where $a_{i}$ is a circle.
Similarly, $a_{i}$ separates the surface into two components,
say $N_{1}^{i}$ and $N_{2}^{i}$, which are diffeomorphic to $N_1$ and
$N_2$, respectively. By the structure of abelian groups, the circles $a_{i}$
can be chosen to be disjoint and pairwise nonisotopic. Moreover, $N_{1}^{i}$
is diffeomorphic to $N_{1}^{j}$ and $a_{i}$ is not isotopic to $a_{j}$
for any $i \neq j$. So, $N_1^{i}$ cannot be contained inside $N_{1}^{j}$.
Therefore, if $N_{1}^{i} \cap N_{1}^{j} \neq \emptyset$, then $N_{1}^{i} \cup
N_{1}^{j} = N_g$.

Since $a_{i}$ is not isotopic to $a_{j}$, we
see that $\chi(N_{1}^{i} \cap N_{1}^{j}) < 0$. Hence, we see that
$\chi(N_g) = \chi(N_{1}^{i}) + \chi(N_{1}^{j}) - \chi(N_{1}^{i}
\cap N_{1}^{j}) = \chi_{1} + \chi_{1} - \chi(N_{1}^{i} \cap
N_{1}^{j}) > \chi_{1} + \chi_{1} \geq \chi_{1} + \chi_{2} =
\chi(N_g)$ which is a contradiction. So, $N_{1}^{i} \cap
N_{1}^{j} = \emptyset$, in other words, the surfaces $N_{1}^{i}$ are
all disjoint. Let $N^{0} = Cl(N_g \setminus (N_{1}^{1} \cup \cdots \cup N_{1}^s))$.
The fact that $a_{i}$ are being pairwise nonisotopic implies that $N^{0}$ cannot
an annulus.  Also, $N^{0}$ is not a sphere, a torus, a Klein bottle,
a projective plane, a disk or a M\"obius band. Thus, $\chi(N^{0}) < 0$.

Now, $2-g = \chi(N_g)= \chi(N^{0}) + \chi(N_{1}^{1}) + \cdots + \chi(N_{1}^s)$.
 So, $2-g \leq s \chi(N_{1}^{1})$. Then, $g-2 \geq s
\mid \chi(N_{1}^{1}) \mid$. So we get $2g-4 \geq 3g-7$, which implies $3 \geq g$.
This is a contradiction. Thus, $a$ is a nonseparating circle. (For even genus, we would have
$2g-4 \geq 3g-8$, which would yield $4 \geq g$, a contradiction to the assumption that
$g\geq 6$.)

We take an element $\psi \in C_{{\rm Mod}(N_g)}(K)$. $\Psi :
N_g \to N_g$ denotes a diffeomorphism in the isotopy class
$\psi$. The group $K$ is generated by $f_{1}, \cdots, f_s$ and
$f_{i} = t_{a_{i}}^{m_{i}}$ as before. Since $\psi \in C_{{\rm
Mod}(N_g)}(K)$, $\psi f_{i} \psi^{-1} = f_{i}$ and thus $\psi
f_{i}^{n} \psi^{-1} = f_{i}^{n}$. In other words, $\psi
t_{a_{i}}^{m_{i}} \psi^{-1} = t_{a_{i}}^{m_{i}}$ and so
$t_{\Psi(a_{i})}^{m_{i}} = t_{a_{i}}^{m_{i}}$. So, replacing $\Psi$
by an isotopy we can assume that $\Psi(a_{i})=a_{i}$. Let
$\mathcal{\bar{C}} = \{a_{1} \} \cup \cdots \cup \{a_s\}$.

Since $s$ is the maximal number of pairwise nonisotopic disjoint
two-sided circles on $N_g$, every component of $N_g^{\mathcal{\bar{C}}}$ is a
disc with two holes or a one holed M\"obius band. Total number of the
boundary components of the components of $N_g^{\mathcal{\bar{C}}}$
is $2s$ and hence all the components of
$N_g^{\mathcal{\bar{C}}}$ except one must be pair of pants.
(In the even genus case, $s+1$ is the maximal number of nonisotopic
two-sided circles on $N_g$. Now, similar arguments prove that all
the components $N_g^{\mathcal{\bar{C}}}$ but one are
discs with two holes, and the remaining component is a sphere with four
holes.)

Suppose that $\Psi$ sends one of these components to another one.
Since $\Psi$ fixes the circles(boundary components) of
$N_g^\mathcal{\bar{C}}$, we see that $N_g= \bar P \cup \bar R$ where
$\Psi(P)=R$. This is a contradiction to the assumption that
$N_g$ is a nonorientable surface of odd genus. So $\Psi$ does
not permute the components. Hence, $\Psi$ is a composition of twists
about $a_{i}$. (Again, in the even genus case, since there is only one
sphere with four holes component of $N_g^{\mathcal{\bar{C}}}$, we see that
$\Psi$ cannot interchange the components.)

Now note that, $f \in C_{{\rm Mod}(N_g)}(K)$ because $f \in K$,
and $K$ is abelian. Without loss of generality, suppose that $f=f_{1}$. So,
$f^{n} = f_{1}^{n} = t_{a_{1}}^{m}$. So by the above paragraph, we
obtain that $f$ is a power of $t_{a_{1}}$; $f=t_{a_{j}}^{l}$ for
some $l$, because $K$ is free abelian. Therefore, $f_{i}=t_{a}^{l}$
and $K$ is generated by $t_{a_{i}}^{l}$. Thus, $t_{a}$ is in
$C_{{\rm Mod}(N_g)}(K)$. By the last condition, $f$ is primitive
and hence, $f = t_{a}$.

In the even genus case, one has to prove that the complement of $a$
is nonorientable.  Assume that the complement of $a$ is orientable.
Since each $t_{a_i}$ is conjugate to $t_a$ the complement of each $a_i$ is
also orientable. It is easy to see that each $a_i$ is a ${\mathbb Z}_2$
homology cycle Poinc\'are dual to the first Stiefel-Whitney class
$\omega_1$ of the surface $N_g$. In particular, for any $1\leq i,j \leq s$,
$i\neq j$, $a_i$ and $a_j$ form a boundary of a subsurface $S_{i,j}$ of $N_g$.
Moreover, since the complement of each $a_i$ is orientable, $S_{i,j}$ is an
orientable surface of genus at least one with two boundary components.
By relabeling $a_i$'s we may assume that the interior of each $S_{i,i+1}$
(where we set $S_{s,s+1}=S_{s,1}$) does not intersect any $a_k$. Hence,
$S_{i,i+1}\cap S_{j,j+1}=\emptyset$, if and only if $|i-j|>1$.  Now,
$N_g=\cup_{1\geq i \geq s}S_{i,i+1}$, and therefore we get $g\geq 2s+2=3g-8+2=3g-6>g$,
a contradiction since $g\geq 6$. Hence, the complement of $a$ must be nonorientable.

The other direction of the theorem is straight forward and left to the reader
(see also \cite{I1}).
\end{proof}

Similar arguments prove the following theorems (compare with the
corresponding result in \cite{I1}):

\begin{Theorem}\label{Chr-2.1} Let $g\geq 5$ be an odd integer.
An element  $f \in {\rm Mod}(N_g)$ is a Dehn twist
if and only if the following conditions are satisfied:

\begin{itemize}

\item[i)] $C(C_{\Gamma'}(f^{n}))$ is isomorphic to
$\mathbb{Z}$, for any integer $n \neq 0$ such that $f^{n} \in
\Gamma'$.

\item[ii)] There exists a free abelian subgroup $K$
of ${\rm Mod}(N_g)$ generated by $f$ and $\displaystyle\frac{3g-9}{2}$ twists about
two-sided nonseparating circles such that $rank \ (K)=\displaystyle\frac{3g-7}{2}$.

\item[iii)] $f$ is a primitive element of $C_{{\rm Mod}(N_g)}(K)$.

\end{itemize}
\end{Theorem}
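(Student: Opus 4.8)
The plan is to adapt the argument for Theorem~\ref{Chr-1} to the present setting, the key difference being that we no longer ask for the complement of the twisting circle to be nonseparating, only that $f$ be \emph{some} Dehn twist, and correspondingly we are handed a reduction system where $\frac{3g-9}{2}$ of the generators are already known to be twists about two-sided nonseparating circles. First I would run the opening moves of the proof of Theorem~\ref{Chr-1} verbatim: condition (i) together with Lemma~\ref{lambdasubgroup2} forces $f^n\neq\mathrm{id}$; then, writing $\mathcal{C}'$ for a minimal reduction system of $f^n$ and $G'=G\cap\Gamma'$ for the group generated by twists about two-sided circles in $\mathcal{C}'$, the containment $G'\subset C(C_{\Gamma'}(f^n))\cong\mathbb{Z}$ shows $\mathcal{C}'$ contains at most one two-sided circle. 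The case of no two-sided circle is excluded exactly as before (the maximal abelian subgroup containing $f$ would have rank $1$, contradicting $\mathrm{rank}(K)=\frac{3g-7}{2}\geq 4$), so $\mathcal{C}'=\{c_1,\dots,c_k,a\}$ with $a$ two-sided and the $c_i$ one-sided, and the $D'\cong\mathbb{Z}$ argument gives $f^n=t_a^m$.

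The next step is the analysis of $C_{{\rm Mod}(N_g)}(K)$. Write $f=f_1$ without loss of generality and let $f_2,\dots,f_{s}$ (with $s=\frac{3g-7}{2}$) be the remaining generators of $K$; by hypothesis (ii) we may take $f_2,\dots,f_{s}$ to be powers of Dehn twists $t_{a_2},\dots,t_{a_{s}}$ about two-sided nonseparating circles, and we set $a_1=a$. As in Theorem~\ref{Chr-1}, from the structure theory of abelian subgroups of $\Gamma'(m)$ (Lemma~\ref{lambdasubgroup1}) the circles $a_1,\dots,a_{s}$ can be realized disjointly and pairwise nonisotopically, so $\bar{\mathcal{C}}=\{a_1,\dots,a_{s}\}$ is a maximal system of disjoint pairwise nonisotopic two-sided circles on $N_g$ (maximality because $s=\frac{3g-7}{2}$ is that maximal number for odd genus, by the rank lemma at the end of Section~\ref{Prelim}). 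Hence every component of $N_g^{\bar{\mathcal{C}}}$ is a pair of pants or a one-holed M\"obius band. Given $\psi\in C_{{\rm Mod}(N_g)}(K)$ with representative $\Psi$, the relation $t_{\Psi(a_i)}^{m_i}=t_{a_i}^{m_i}$ lets us isotope $\Psi$ so that $\Psi(a_i)=a_i$ for all $i$, and the parity/Euler-characteristic obstruction used in Theorem~\ref{Chr-1} (a nontrivial permutation of the components would split $N_g$ as $\bar P\cup\bar R$ with $\Psi(P)=R$, impossible for odd genus) shows $\Psi$ fixes every component. Therefore $\psi$ is a product of powers of the $t_{a_i}$; in particular $t_a\in C_{{\rm Mod}(N_g)}(K)$, and since $f^n=t_a^m$ and $K$ is free abelian, $f$ itself is a power of $t_a$. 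Condition (iii), primitivity of $f$ in $C_{{\rm Mod}(N_g)}(K)$, then forces $f=t_a$.

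Note that here, unlike in Theorem~\ref{Chr-1}, we do not need to determine whether $a$ is separating or nonseparating: the conclusion of the present theorem is merely that $f$ is a Dehn twist, so once we have $f=t_a$ we are done. This also means the delicate Stiefel--Whitney-class argument and the separating-versus-nonseparating Euler-characteristic count from Theorem~\ref{Chr-1} can be dropped entirely, which shortens the proof considerably. For the converse direction, one checks that an arbitrary Dehn twist $t_a$ satisfies (i)--(iii): (i) follows from the computation of $C(C_{\Gamma'}(t_a^n))$, which is $\mathbb{Z}$ by the commuting/braid-relation properties of Dehn twists on nonorientable surfaces (Stukow~\cite{SM}) exactly as in the orientable case~\cite{I1}; (ii) follows by extending $a$ to a suitable system of $\frac{3g-9}{2}$ disjoint two-sided nonseparating circles so that the associated twist subgroup has rank $\frac{3g-7}{2}$, using the rank lemma of Section~\ref{Prelim}; and (iii) is the standard fact that a Dehn twist is primitive in the centralizer of such a twist subgroup. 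The main obstacle is bookkeeping: one must verify that replacing ``nonseparating circle'' by ``arbitrary circle'' in the conclusion does not break the maximality count $s=\frac{3g-7}{2}$ used to pin down the topological type of the complement components, and that condition (ii) as stated (with $\frac{3g-9}{2}$ nonseparating two-sided generators plus $f$) is genuinely realizable by \emph{some} Dehn twist $t_a$ regardless of whether $a$ is separating—both of which follow from the rank lemma but should be stated carefully.
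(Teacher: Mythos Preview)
Your proposal is correct and follows essentially the same approach as the paper, which simply states that ``similar arguments prove the following theorems'' after the detailed proof of Theorem~\ref{Chr-1}. You have correctly identified that the separating/nonseparating Euler-characteristic count and the Stiefel--Whitney argument can be dropped since the conclusion here is only that $f$ is \emph{some} Dehn twist, and that the remaining structure (maximality of $\bar{\mathcal C}$, component analysis, primitivity) goes through unchanged; one tiny slip is that hypothesis~(ii) gives the $f_i$ for $i\geq 2$ as actual Dehn twists rather than powers, but this only simplifies matters.
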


For Dehn twists on even genus surfaces we have,

\begin{Theorem}\label{Chr-2.2} Let $g\geq 6$ be an even integer.
An element  $f \in {\rm Mod}(N_g)$ is a Dehn twist if and
only if the following conditions  are satisfied:
\begin{itemize}
\item[i)] $C(C_{\Gamma'}(f^{n}))$ is isomorphic to
$\mathbb{Z}$, for any integer $n \neq 0$ such that $f^{n} \in
\Gamma'$.

\item[ii)] There exists a free abelian subgroup $K$
of ${\rm Mod}(N_g)$ generated by $f$ and $\displaystyle \frac{3g-10}{2}$
Dehn twists about nonseparating circles whose complements are nonorientable,
such that $rank \ (K)=\displaystyle \frac{3g-8}{2}$.
\item[iii)] $f$ is a primitive element of $C_{{\rm Mod}(N_g)}(K)$.

\end{itemize}
\end{Theorem}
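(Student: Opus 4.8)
The plan is to mimic the proof of Theorem~\ref{Chr-1} almost verbatim, since the only new feature here is that we allow a Dehn twist $f = t_a$ about an \emph{arbitrary} nonseparating two-sided circle (so that the complement $N_g^a$ may be orientable), rather than requiring the complement to be nonorientable. First I would treat the ``only if'' direction: given $f = t_a$, conditions (i) and (iii) are standard facts about centralizers of Dehn twists (using that $C_{\Gamma'(m)}$ of a Dehn twist localizes on the complement and invoking Lemma~\ref{lambdasubgroup2} and Stukow's results); for (ii) one exhibits an explicit collection of $\tfrac{3g-10}{2}$ pairwise-disjoint nonisotopic two-sided nonseparating circles, each with nonorientable complement, disjoint from $a$, so that together with $a$ they cut $N_g$ into pants and one sphere with four holes. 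The count $\tfrac{3g-8}{2}$ for the maximal rank of such a system is the same computation as in Theorem~\ref{Chr-1} (Euler characteristic $2-g$, components of $\chi = -1$ or $-1$, one component of $\chi = -2$), so this direction is routine.

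For the ``if'' direction, I would run the Thurston-normal-form argument of Theorem~\ref{Chr-1} to conclude from (i) that the minimal reduction system $\mathcal{C}'$ of $f^n$ consists of some one-sided circles $c_1,\dots,c_k$ together with \emph{at most one} two-sided circle; then rule out the case of no two-sided circle exactly as before (a pseudo-Anosov piece would force the maximal abelian subgroup through $f$ to have rank $1$, contradicting $\tfrac{3g-8}{2} \geq 5$); hence $f^n = t_a^{\,m}$ for a single two-sided circle $a$. Next I would show $a$ is nonseparating by the same Euler-characteristic estimate: writing $f_i = t_{a_i}^{m_i}$ for the generators of $K$, one argues the circles $a_i$ can be made disjoint and pairwise nonisotopic, and if each $a_i$ separated off a piece $N_1^i$ with $\chi_1 \le \chi_2 < 0$, cutting along all the $a_i$ and one leftover piece $N^0$ with $\chi(N^0) < 0$ gives $2 - g \le s\,\chi(N_1^1)$ with $s = \tfrac{3g-8}{2}$, whence $2g-4 \ge 3g-8$, i.e. $g \le 4$, contradicting $g \ge 6$. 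Finally, fixing the $a_i$ pointwise under any $\Psi \in C_{{\rm Mod}(N_g)}(K)$, the complement $N_g^{\bar{\mathcal{C}}}$ with $\bar{\mathcal{C}} = \{a_1,\dots,a_s\}$ consists of pants and exactly one sphere with four holes (since $s+1 = \tfrac{3g-6}{2}$ is the maximal number of disjoint nonisotopic two-sided circles), so $\Psi$ cannot permute the components and is a product of the $t_{a_i}$; then $f = t_{a_j}^l$ and primitivity (iii) forces $l = \pm 1$, so $f$ is a Dehn twist.

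The one genuine difference from Theorem~\ref{Chr-1} is that I must \emph{not} attempt to prove that the complement of $a$ is nonorientable — indeed the orientability-exclusion paragraph in the proof of Theorem~\ref{Chr-1} (the argument via the Stiefel--Whitney class $\omega_1$ and the subsurfaces $S_{i,i+1}$) is precisely what is dropped here, and correspondingly the rank bound drops from $\tfrac{3g-7}{2}$ to $\tfrac{3g-8}{2}$ to accommodate the sphere-with-four-holes component. The main obstacle I anticipate is verifying that the generators of $K$ in the ``only if'' direction really do realize the claimed rank $\tfrac{3g-8}{2}$ while all having nonorientable complement: one must check that a maximal system of disjoint nonisotopic two-sided nonseparating circles, each with nonorientable complement, can be chosen so that it contains any prescribed such circle $a$ — this is a surface-topology bookkeeping step (distributing the crosscaps among the pants-and-one-four-holed-sphere decomposition), but it is not automatic and deserves an explicit picture or an inductive argument. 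Everything else transfers mechanically from the proof above and from \cite{I1}, so I would present those steps tersely, referring back to Theorem~\ref{Chr-1} for the shared computations.
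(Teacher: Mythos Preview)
Your proposal rests on a misreading of the statement that makes the ``if'' direction go wrong. Theorem~\ref{Chr-2.2} characterizes \emph{all} Dehn twists on $N_g$, including those about separating circles; it is the even-genus analogue of Theorem~\ref{Chr-2.1}, not of the even-genus parentheticals in Theorem~\ref{Chr-1}. The essential structural change from Theorem~\ref{Chr-1} is not the omission of the Stiefel--Whitney paragraph, but condition~(ii) itself: here the other $\tfrac{3g-10}{2}$ generators of $K$ are \emph{prescribed} Dehn twists about nonseparating circles with nonorientable complement, not conjugates of $f$.

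Consequently your step ``show $a$ is nonseparating by the same Euler-characteristic estimate'' is both unnecessary and ill-posed. That estimate in Theorem~\ref{Chr-1} relies on all the $a_i$ being conjugate to $a$, so that if $a$ separates then every $a_i$ separates off a piece of the same Euler characteristic $\chi_1$, and only then does one get $2-g\le s\chi_1$. In the present theorem the circles $a_2,\dots,a_s$ are nonseparating \emph{by hypothesis}, so your premise ``if each $a_i$ separated off a piece $N_1^i$'' is simply false, and indeed $a=a_1$ may well be separating (e.g.\ $f$ could be the twist about the boundary of a one-holed Klein bottle). The route the paper intends by ``similar arguments'' is: after establishing $f^n=t_a^m$ from condition~(i), use commutativity to make $a$ disjoint from the given circles $a_2,\dots,a_s$, analyze the components of $N_g$ cut along $\{a,a_2,\dots,a_s\}$ (pants plus one four-holed sphere, by the same boundary/Euler-characteristic count), show any $\psi\in C_{{\rm Mod}(N_g)}(K)$ is a product of $t_a,t_{a_2},\dots,t_{a_s}$, and conclude from primitivity that $f=t_a^{\pm1}$---with no claim about the topological type of $a$. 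Your ``only if'' direction has the same blind spot: the system in condition~(ii) must be exhibited for \emph{every} nontrivial two-sided circle $a$, separating ones included, not merely for nonseparating circles with nonorientable complement.
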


\begin{Remark}\label{Ch-3.2}
{\bf 1)} Let $\Phi:{\rm Mod}(N)\rightarrow {\rm Mod}(N)$ be an automorphism, where
the genus of $N$ is at least five. Since $\Gamma'$ is a subgroup of $\Gamma'(m)$ of
finite index the subgroup $\Phi(\Gamma')\cap \Gamma'(m)$ is still a finite index
subgroup of $\Gamma'(m)$. Hence the above theorems give an algebraic characterization
of a Dehn twist. Therefore, we can also characterize algebraically
the subgroup $T$ of  ${\rm Mod}(N)$ generated by all Dehn twists.\\
{\bf2)} A two-sided embedded circle on a closed nonorientable surface of odd genus
is either separating or nonseparating with nonorientable complement.
Therefore, by Theorem~\ref{Chr-1} and Theorem~\ref{Chr-2.1} an automorphism of the
mapping class group respects the topological type of the circles of the Dehn twists.

However, in case of nonorientable surfaces of even genus, Theorem~\ref{Chr-1}
and Theorem~\ref{Chr-2.2} are not strong enough to distinguish a Dehn twist
about a separating circle from a Dehn twist about a nonseparating circle with orientable
complement. To make sure that automorphisms respect the topological type of the circles
the Dehn twists are about, we need to study chains of Dehn twists.
\end{Remark}

\subsection{Chains of Dehn twists}
Now we would like to discuss an application of the above theorems to the
chains of Dehn twists, which we will use in the study outer
automorphisms of mapping class groups of nonorientable surfaces. Whenever we consider
a chain or a tree of Dehn twists we will choose the circles the Dehn twists are about so that
they intersect minimally (e.g. if the surface has negative Euler characteristic
then we put an hyperbolic metric on it and choose the unique geodesics from each isotopy
class).

Stukow's results (\cite{SM}) mentioned in Section $2$ together with the above theorems
imply that any chain of Dehn twists is mapped to a chain of Dehn twists under
an automorphism of the mapping class group.

On an orientable surface, a chain of circles is maximal if and only if it is
separating.  Moreover, in this case the chain has odd number of circles.
This is no longer true on nonorientable surfaces of even genera.  For example,
a two sided circle with orientable complement forms a maximal chain of length one;
however it is not separating. Nevertheless, we have the following result.

\begin{Lemma}\label{chain} Let $N_g$ be the nonorientable
surface of genus $g$.
\begin{itemize}
\item[\bf i)] If a chain of two sided nontrivial circles $\{c_1,\cdots c_l\}$ on the
closed surface $N_g$ is maximal then $l$ is an odd integer. Moreover, if $g$ is an
odd integer, then, the chain is maximal if and only if the chain separates the
surface into two components.

In case of surfaces of even genus $g$, the chain $\{c_1,\cdots c_l\}$ is maximal
if and only if it is ether separating, or nonseparating with orientable complement.

\item[\bf ii)]Let $\{c_1,\cdots c_{2k+1}\}$ be a separating maximal chain of two-sided
nontrivial circles on $N_g$.  Then it separates a disc if and only if there is no
Dehn twist which commutes with all of the Dehn twists about the circles of the chain,
but $t_{c_1}$.
\end{itemize}
\end{Lemma}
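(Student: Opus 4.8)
The plan is to reduce everything to the topology of a regular neighborhood of the chain. For a chain $\mathcal{C}=\{c_1,\dots,c_l\}$ of two-sided circles in minimal position, a regular neighborhood $N(\mathcal{C})$ is \emph{orientable}, and an induction on $l$ (each new circle meets the previous one transversally in one point, forcing the four local rays to alternate, so the extension is the standard handle attachment) shows $N(\mathcal{C})\cong\Sigma_{k,1}$ if $l=2k$ and $N(\mathcal{C})\cong\Sigma_{k,2}$ if $l=2k+1$. Hence $\chi(N(\mathcal{C}))=1-l$, the complement $R:=N_g\setminus\mathring{N}(\mathcal{C})$ has $\chi(R)=l+1-g$, it has one boundary circle if $l$ is even and two if $l$ is odd, and — since $N(\mathcal{C})$ is orientable and $N_g$ is not — at least one component of $R$ is nonorientable (in the connected case, equivalently, the identification of $\partial N(\mathcal{C})$ with $\partial R$ is the orientation-reversing one). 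A two-sided nontrivial circle meeting $c_l$ once and disjoint from $c_1,\dots,c_{l-1}$ is the same as such a circle in $N_g$ cut along $c_1,\dots,c_{l-1}$ meeting the image arc $\bar c_l$ once, and $\mathcal{C}$ is maximal exactly when neither this nor the symmetric picture at $c_1$ produces such a circle.

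For (i), the parity claim is that an even chain extends. If $l=2k$ then $R\cong N_{g-2k,1}$ is connected and nonorientable, and one builds the new circle $c_{2k+1}$ by band-summing a sub-arc of $c_{2k}$ running along $\partial N(\mathcal{C})$ with a properly embedded, essential, orientation-preserving arc of $R$ realising the splitting of the single boundary circle of $\Sigma_{k,1}$ into the two boundary circles of $\Sigma_{k,2}$; this arc exists once $R$ is large enough, the borderline cases being absorbed by the hypothesis $g\ge 5$, and the construction makes $c_{2k+1}$ two-sided, nontrivial, and non-isotopic to the $c_i$. The characterization of maximal odd chains runs the same extension test against the possible complements: $\mathcal{C}$ fails to extend exactly when either (separating case) every component of $R$ is a disc or a Möbius band, or (non-separating case) $R$ is orientable and glued by the orientation-reversing identification — which, exactly as for a single circle, forces every one-crossing curve at either end to be one-sided. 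An Euler-characteristic and cross-cap count then shows the separating configuration has odd genus only if one component of $R$ is a disc (so the chain separates off a disc); hence for odd $g$ every maximal chain is separating, while for even $g$ one also gets the ``non-separating with orientable complement'' family. The real work — this is where I expect the difficulty — is the two-way extendability test: producing the extending circle explicitly in all non-maximal cases and verifying it is two-sided, nontrivial and new, while keeping the orientability/cross-cap bookkeeping straight under cutting and regluing.

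For (ii), let $\{c_1,\dots,c_{2k+1}\}$ be a separating maximal chain. By (i), $N(\mathcal{C})\cong\Sigma_{k,2}$ with boundary curves $d_1,d_2$, and $R=R_1\sqcup R_2$ with $R_i$ glued along $d_i$ and each $R_i$ a disc or a Möbius band, at least one a Möbius band. By Stukow's result a Dehn twist commutes with $t_{c_1},\dots,t_{c_{2k+1}}$ iff its circle misses every $c_i$, i.e.\ lies in $R_1\sqcup R_2$; as a disc and a Möbius band carry no essential two-sided circle but their boundary, the only such twists are $t_{d_1}$ and $t_{d_2}$. The chain separates off a disc iff, after relabelling, $R_1$ is a disc iff $d_1$ bounds a disc iff $t_{d_1}$ is trivial — in which case $t_{d_2}$ is the unique nontrivial Dehn twist commuting with all the chain twists; otherwise both $R_i$ are Möbius bands and $t_{d_1},t_{d_2}$ are distinct nontrivial such twists. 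This is the content of the Lemma once one reads the stated condition as: the chain separates off a disc iff every Dehn twist commuting with $t_{c_2},\dots,t_{c_{2k+1}}$ already commutes with $t_{c_1}$ — proved by the same picture, since such a twist has its circle in the surface $S$ obtained by cutting along $c_2,\dots,c_{2k+1}$, where $c_1$ becomes an arc $\bar c_1$, and every essential circle of $S$ can be pushed off $\bar c_1$ precisely when $\bar c_1$ cuts a disc off $S$, i.e.\ precisely when the chain separates off a disc. The one external input needed is that the Dehn twist about the boundary of an embedded Möbius band in $N_g$, $g\ge 5$, is nontrivial (indeed of infinite order), which follows from the algebraic characterization of Dehn twists established above.
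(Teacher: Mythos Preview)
Your proposal contains a genuine topological error that undermines both parts. You claim that for a separating maximal chain each complementary piece $R_i$ is a disc or a M\"obius band, and in (i) that ``the separating configuration has odd genus only if one component of $R$ is a disc.'' Both are false.

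Counterexample: on $N_5$ take a single separating two-sided nontrivial circle $c_1$ (a chain of length $1$, so $k=0$). The complement is, say, $N_{2,1}\sqcup N_{3,1}$; neither piece is a disc or a M\"obius band, yet the chain is maximal: since $c_1$ separates, every circle meets it an even number of times, so no $c_2$ with $i(c_1,c_2)=1$ exists. More generally, \emph{any} separating odd chain is automatically maximal, because a would-be extension $c_{2k+2}$ meeting $c_{2k+1}$ once and missing the rest would cross the separating wall $\bigcup_i c_i$ an odd number of times. So in (i) the separating case imposes no constraint whatsoever on the $R_i$; the content is entirely in showing that a \emph{non}-separating odd chain can be extended --- for odd $g$ the connected complement is forced to be nonorientable by an Euler-characteristic parity check and one can then route a two-sided extending circle through a crosscap, while for even $g$ extension fails exactly when the complement is orientable. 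You seem to be conflating ``maximal'' with ``of maximal length'' (compare Lemma~\ref{Lemma-Y-hom}); only the latter forces the complementary pieces to be small.

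This error propagates into (ii). Your first argument there --- that the only Dehn twists commuting with the whole chain are $t_{d_1},t_{d_2}$ --- is simply false in the example above: $N_{3,1}$ carries many essential two-sided circles disjoint from the chain. You do eventually switch to the correct reading of the statement (a Dehn twist commuting with $t_{c_2},\dots,t_{c_{2k+1}}$ but not with $t_{c_1}$), and your reduction to the arc $\bar c_1$ in $S=N_g\setminus N(c_2\cup\cdots\cup c_{2k+1})$ is the right move; it matches the paper's ``only if'' argument (if one $R_i$ is a disc, push the arcs of $C\cap R_i$ off through that disc, so $C$ misses $c_1$ after all). But for the ``if'' direction you still owe, when \emph{neither} $R_i$ is a disc, an explicit two-sided nontrivial circle $C$ with $i(C,c_1)>0$ and $i(C,c_j)=0$ for $j\ge 2$. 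Saying ``proved by the same picture'' does not suffice, since your picture of the $R_i$ was wrong. The paper supplies this direction by a figure exhibiting such a $C$, built from essential arcs in $R_1$ and $R_2$ glued across $\bar c_1$; you would need to carry out that construction and verify two-sidedness and nontriviality of $C$ for arbitrary (possibly large, possibly nonorientable) $R_i$, not just for M\"obius bands.
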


\begin{proof}
The proof of the part (i) is left to the reader.

For the statement in part (ii), Figure~\ref{2o}, which establishes the proof in two
special cases, indeed provides the proof of the ''if" part of the statement.

\begin{figure}[hbt]
  \begin{center}
    \includegraphics[width=10cm]{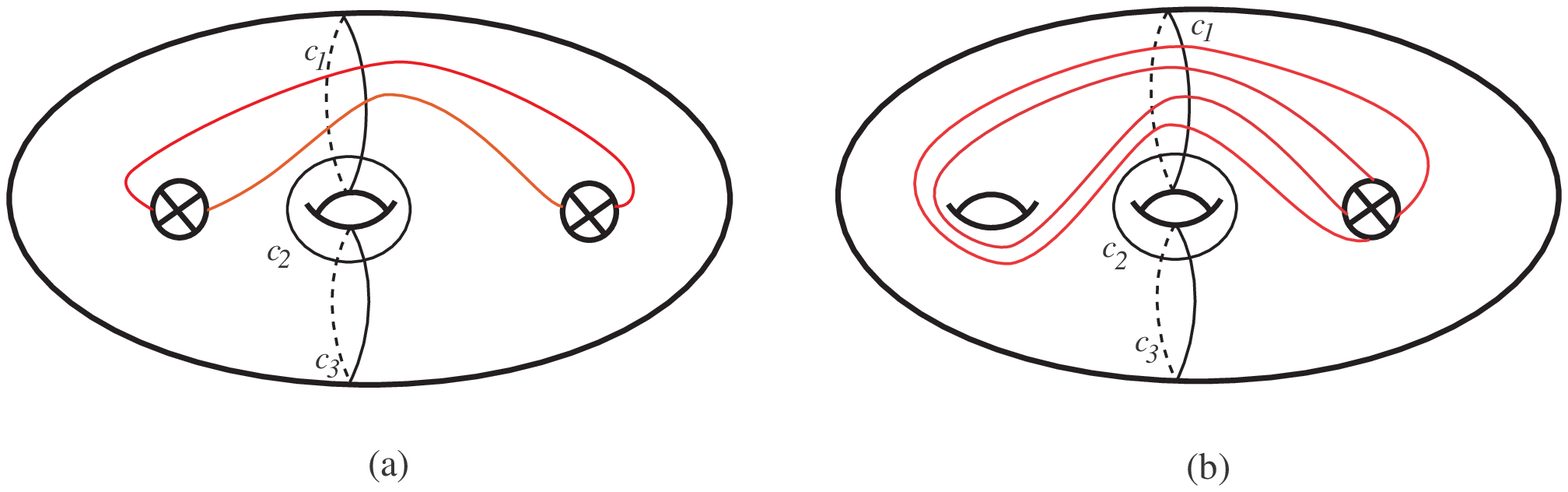}
    \caption{}
    \label{2o}
  \end{center}
 \end{figure}

For the other direction assume that there is a smooth curve $C$
which intersects the chain transversally at finitely points of only $c_1$ and
does not meet any other circle of the chain.  Let $S$ be any one of the
two subsurfaces obtained by cutting the surface $N_g$ along the chain.
Since $c_1$ is the first element of the chain, the $c_1$ part of the
boundary of $S$ is connected. Note that $S\cap C$ is a disjoint union of arcs.
Now, if $S$ is a disc we can isotope these arcs outside the surface $S$ so that
$C$ does not intersect the chain anymore.
\end{proof}

As mentioned in Remark~\ref{Ch-3.2} (1) we can use chains of Dehn twists to distinguish
algebraically Dehn twists about separating circles from Dehn twists about nonseparating
circles with orientable complements on the nonorientable surface of even genus $N_g$:
Indeed, the longest chain on $N_g$, which lies in the complement of a nonseparating circle
with orientable complement is $g-1$ (see also Lemma~\ref{Lemma-Y-hom}). On the other hand,
in the complement of a nontrivial separating circle the maximal length of a chain is again
$g-1$ but in this case the circle separates the surfaces into two components, a Klein bottle
with one boundary component and an orientable surface of genus $\displaystyle \frac{g-2}{2}$
with one boundary component. Hence, in the case of nonseparating circle with orientable
complement one can find one maximal chain of length $g-1$, whereas in the case of separating
circles one can find two maximal chains, which lie in different components (and thus
any element of one chain commutes with any element of the other chain) of lengths
$g-1$ and $1$.  These arguments prove the following:

\begin{Corollary}\label{Ch3.3} Let $g\geq 6$ be an even integer.
An automorphism of the mapping class group of $N_g$ maps a Dehn twist about a separating circle
to a Dehn twist about a separating circle.  Hence, an automorphism respects the topological types of
the circles of the Dehn twists.
\end{Corollary}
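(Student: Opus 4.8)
The plan is to leverage the algebraic descriptions already available --- Dehn twists are characterized algebraically by Theorems~\ref{Chr-1} and \ref{Chr-2.2} (Remark~\ref{Ch-3.2}(1)), chains of Dehn twists are sent to chains of Dehn twists, and, by Stukow's results, two Dehn twists commute exactly when their circles are disjoint --- and to add one further algebraic property which detects whether the circle carrying a Dehn twist has \emph{connected} or \emph{disconnected} complement. On a closed nonorientable surface $N_g$ of even genus every two-sided circle is separating, or nonseparating with orientable complement, or nonseparating with nonorientable complement, and the last class is already preserved by every automorphism by Theorem~\ref{Chr-1}; so the task reduces to separating algebraically the first two classes.

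First I would fix an automorphism $\Phi$ of ${\rm Mod}(N_g)$ and set up the bookkeeping. By Remark~\ref{Ch-3.2}(1), $\Phi$ carries Dehn twists to Dehn twists; as ``the terms are pairwise distinct, consecutive ones satisfy the braid relation, and nonconsecutive ones commute'' is an isomorphism-invariant statement, $\Phi$ carries a chain of Dehn twists to a chain of Dehn twists of the same length. Representing circles by geodesics for a hyperbolic metric on $N_g$, a collection of circles each having zero geometric intersection with a fixed circle $c$ can be realized disjoint from $c$; hence every chain of Dehn twists all commuting with $t_c$ is carried by circles lying in $N_g^{c}$, and $\Phi$ maps ``chains commuting with $t_c$'' bijectively and length-preservingly onto ``chains commuting with $\Phi(t_c)$''. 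Since also $\Phi(\langle t_c\rangle)=\langle\Phi(t_c)\rangle$, the relation ``is a power of $t_c$'' is $\Phi$-invariant as well.

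I would then introduce the property $P(t_c)$: \emph{there is a chain of $g-1$ Dehn twists, all commuting with $t_c$, such that the only Dehn twists commuting with $t_c$ and with every term of that chain are powers of $t_c$}. By the paragraph above, $P$ is preserved by $\Phi$ and by $\Phi^{-1}$. The heart of the matter is to show that $P(t_c)$ holds when $c$ is nonseparating with orientable complement and fails when $c$ is separating. In the first case $N_g^{c}\cong\Sigma_{(g-2)/2,2}$, a connected orientable surface which (using $g\ge6$) carries a chain of length $2\cdot\frac{g-2}{2}+1=g-1$, and the regular neighbourhood of any such chain is all of $\Sigma_{(g-2)/2,2}$; its complement consists only of collar neighbourhoods of the two boundary circles, both copies of $c$, so every circle disjoint from the chain is parallel to $c$ and only powers of $t_c$ survive --- $P(t_c)$ holds. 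In the second case write $N_g^{c}=S_1\sqcup S_2$ with $\chi(S_i)<0$; a chain commuting with $t_c$ lies in a single component, say $S_1$, while $S_2$ --- having one boundary circle and Euler characteristic at most $-1$ --- always contains a two-sided \emph{nonseparating} circle $e$. Then $t_e$ commutes with $t_c$ and with every circle of $S_1$, yet $e$ is nonseparating in $N_g$ and hence not isotopic to the separating circle $c$, so $t_e\notin\langle t_c\rangle$; thus no chain commuting with $t_c$ witnesses $P(t_c)$, and $P(t_c)$ fails.

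With this in hand the corollary would follow. If $c$ is separating then $\Phi(t_c)=t_{c'}$ is a Dehn twist; by Theorem~\ref{Chr-1}, which is an algebraic characterization (Remark~\ref{Ch-3.2}(1)) and hence $\Phi$-invariant, $c'$ cannot be nonseparating with nonorientable complement; and if $c'$ were nonseparating with orientable complement then $P(t_{c'})$ would hold, forcing $P(t_c)$ and, by the previous paragraph, forcing $c$ to be nonseparating with orientable complement, a contradiction. Hence $c'$ is separating; together with the $\Phi$-invariance of the nonorientable-complement class this shows $\Phi$ respects the topological type of the circle of every Dehn twist. The step I expect to be the real obstacle is the Euler-characteristic bookkeeping behind the third paragraph: verifying that the longest chain in $\Sigma_{(g-2)/2,2}$ has length exactly $g-1$ and fills that surface, and that no such ``filling'' configuration is possible inside a disconnected complement --- equivalently, that the connectedness versus disconnectedness of $N_g^{c}$ is faithfully reflected in the chain-and-commutation structure of $t_c$. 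This is exactly what the discussion preceding the corollary, together with Lemma~\ref{chain}, supplies.
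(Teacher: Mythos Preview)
Your proposal is correct and follows essentially the same approach as the paper. Both arguments use that automorphisms preserve Dehn twists (Theorems~\ref{Chr-1}, \ref{Chr-2.2}) and chains, and then distinguish separating circles from nonseparating circles with orientable complement by an algebraic property built from chains of length $g-1$ in the complement: the paper phrases this as the existence, in the separating case, of a second commuting chain lying in the other component, while you phrase the same phenomenon contrapositively via your property $P(t_c)$ (a chain of length $g-1$ whose centralizer, together with $t_c$, contains no new Dehn twist). Your packaging via $P$ is slightly more explicit, but the underlying geometric content---a length-$(g-1)$ chain fills the connected complement $\Sigma_{(g-2)/2,2}$, whereas a disconnected complement always leaves a spare two-sided nonseparating circle in the other piece---is identical.
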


We will end this section with a result which will be useful in the latter part of the paper.

\begin{Corollary}\label{chains-bd} Let $g\geq 5$ be an integer.
An automorphism of the mapping class group of $N_g$ maps any maximal chain of Dehn twists on
$N_g$, which separates a disc, to a maximal chain of Dehn twists, which separates a disc.
\end{Corollary}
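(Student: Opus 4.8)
The plan is to reduce the statement to Lemma~\ref{chain}(ii) together with the algebraic characterization of Dehn twists from Section~\ref{Dehn}. Fix an automorphism $\Phi$ of ${\rm Mod}(N_g)$ and a maximal chain of Dehn twists $t_{c_1},\dots,t_{c_l}$ whose circles $\{c_1,\dots,c_l\}$ separate a disc off $N_g$; by Lemma~\ref{chain}(i) the length $l$ is odd and the chain is separating. By the discussion following Lemma~\ref{chain} (Stukow's results~\cite{SM} together with Theorems~\ref{Chr-1}, \ref{Chr-2.1}, \ref{Chr-2.2} and Remark~\ref{Ch-3.2}(1)), each $\Phi(t_{c_i})$ is again a Dehn twist $t_{c_i'}$ about a nontrivial two-sided circle, and since $\Phi$ is injective and respects the braid and commutation relations among the $t_{c_i}$, the circles $\{c_1',\dots,c_l'\}$ form a chain of the same length $l$, possibly with the two ends of the chain interchanged.

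First I would check that the image chain is maximal. If it extended to a chain by some nontrivial two-sided circle $d'$, then $t_{d'}$ would be a Dehn twist whose braid and commutation relations with $t_{c_1'},\dots,t_{c_l'}$ are precisely those making $\{c_1',\dots,c_l',d'\}$ a chain; applying $\Phi^{-1}$, which again carries Dehn twists to Dehn twists by Remark~\ref{Ch-3.2}(1), and translating the resulting relations back through Stukow's results into geometric intersection data, we would obtain a nontrivial two-sided circle extending $\{c_1,\dots,c_l\}$ to a chain, contradicting maximality of the original chain.

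Next comes the key observation. Since the original chain is separating and separates a disc, Lemma~\ref{chain}(ii) tells us that no Dehn twist commutes with every one of $t_{c_2},\dots,t_{c_l}$ while failing to commute with $t_{c_1}$; call this property $(\ast)$. The statement of $(\ast)$ refers only to Dehn twists and commutation, and both of these are preserved by $\Phi$, so the image chain also has property $(\ast)$, with $t_{c_1'}$ playing the role of the distinguished end twist (if $\Phi$ reversed the chain, simply relabel so that $c_1'$ denotes the image of $c_1$, which is in any case an end of the image chain). By Lemma~\ref{chain}(i) the image chain, being maximal, is either separating or---and this can occur only when $g$ is even---nonseparating with orientable complement. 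In the separating case Lemma~\ref{chain}(ii) applies once more and $(\ast)$ says exactly that the image chain separates a disc, which is what we want. When $g$ is odd the second alternative does not arise and the proof is complete here.

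The remaining case, and the step I expect to be the main obstacle, is to rule out that for even $g$ a maximal chain with property $(\ast)$ is nonseparating with orientable complement. To do this I would cut $N_g$ along $c_2',\dots,c_l'$ and argue with Euler characteristics: because the full chain $\{c_1',\dots,c_l'\}$ is nonseparating with connected orientable complement $W$, which is not a disc and hence (being orientable with two boundary circles) is an annulus or has $\chi(W)<0$, the surface obtained by this cut is connected with strictly negative Euler characteristic for even $g$, and cutting it further along the arc coming from $c_1'$ recovers $W$, so this last cut does not separate off a disc. Consequently there is an essential two-sided circle $e$ in that surface meeting $c_1'$ and disjoint from $c_2',\dots,c_l'$, and $t_e$ then commutes with $t_{c_2'},\dots,t_{c_l'}$ but not with $t_{c_1'}$, contradicting $(\ast)$. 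The delicate point, which I would need to nail down carefully, is precisely that the arc produced from $c_1'$ does not cobound a disc in that cut surface, so that such an $e$ genuinely exists; granting this, the image chain must be separating, and Lemma~\ref{chain}(ii) shows it separates a disc.
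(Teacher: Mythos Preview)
Your overall strategy---reducing everything to Lemma~\ref{chain}(ii) via the property $(\ast)$---is sound and genuinely different from the paper's argument, but the even-genus step has a real gap, and it is worse than the ``delicate point'' you flag. You want a two-sided essential circle $e$ in the surface $X$ obtained by cutting $N_g$ along $c_2',\dots,c_l'$, with $e$ meeting $c_1'$ essentially, and you suggest this follows because the arc $\alpha$ coming from $c_1'$ does not cobound a disc in $X$. The difficulty is that any simple closed curve in $X$ crossing $\alpha$ an \emph{odd} number of times is one-sided: cutting $X$ along $\alpha$ yields the orientable surface $W$, so a loop in $X$ is orientation-reversing exactly when its mod-$2$ intersection with $\alpha$ is nonzero. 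Thus the natural ``dual circle'' to $\alpha$ is not available as a Dehn-twist curve, and in the extreme case $W\cong$ annulus the unique nontrivial two-sided circle interior to $X\cong N_{2,1}$ is in fact isotopic to the core of $W$, hence disjoint from $c_1'$. Your approach can be rescued by a different choice: take $e$ to be the boundary $\partial X$ of the regular neighborhood of $c_2'\cup\dots\cup c_l'$. This curve is two-sided, nontrivial in $N_g$, disjoint from $c_2',\dots,c_l'$, and has $i(e,c_1')>0$, since otherwise $c_1'$ could be isotoped into $\Sigma_{k,1}$, which forces the full chain to separate a disc rather than be nonseparating.

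The paper handles the even-genus case by a different invariant. When the original chain separates a disc, the other complementary component is nonorientable of positive even genus, so the centralizer of the chain contains a $Y$-homeomorphism---an element outside the subgroup $T$ generated by all Dehn twists. If the image chain were nonseparating with orientable complement, both the tubular neighborhood of the chain and its complement would be orientable, so every mapping class commuting with the image chain would be a product of Dehn twists, i.e.\ lie in $T$. Since $T$ is algebraically characterized (Remark~\ref{Ch-3.2}) and hence preserved by $\Phi$, this contradicts $\Phi$ carrying the $Y$-homeomorphism into that centralizer. The paper's route is shorter and leans on the invariance of $T$; yours stays at the level of intersection patterns of curves and avoids $Y$-homeomorphisms, at the price of the extra geometric verification above.
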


\begin{proof}
By Lemma~\ref{chain} it is enough to prove that the image of a separating chain must be
again separating. In the case of surfaces of odd genus a chain is separating if and only if
it is maximal, which is a property preserved by algebraic automorphisms. So we may assume that
$\{c_1,\cdots c_{2k+1}\}$ is a separating chain on the nonorientable surface $N_g$ of even genus
and the chain separates a disc from the surface.  Hence, $N_g$ cut along the chain is the
disjoint union of a disc and a nonorientable surface with one boundary component of genus
$g-2k$, which is a positive even integer because $g$ is even and the complement must
be nonorientable.  Therefore, there is an element, in the centralizer of the chain (i.e., it is
a $Y$ homeomorphism which commutes with all elements of the chain) which is not a product of
Dehn twists.

Now assume that the image $\{d_1,\cdots d_{2k+1}\}$, of this chain under an automorphism of the
mapping class group, is nonseparating. So by the above lemma, the complement of the chain is
an orientable surface, say $S$, with two boundary components of genus $\displaystyle \frac{g-2k-2}{2}$.
Now any mapping class, which is in the centralizer of the chain $\{d_1,\cdots d_{2k+1}\}$, has
a representative $F$, a diffeomorphism of $N_g$, which maps a tubular neighborhood $\nu$, of the chain
onto itself.  So, $F$ induces a diffeomorphism of both $\nu$ and $\overline{N_g- \nu}$. Since both
pieces are orientable $F$ is isotopic to a product of Dehn twists. However, this
contradicts to the fact that Dehn twists and hence their products are respected by automorphisms
of the mapping class group.
\end{proof}

Indeed, similar arguments can used to prove that the topological type of other separating
Dehn twists are preserved under automorphisms of the mapping class group.

\subsection{Characterization of Y-Homeomorphisms}
As an application of the previous results we will an algebraic characterization of
Y-homeomorphisms.  By definition a Y-homeomorphism on a nonorientable surface $N_{g,n}$ is a
diffeomorphism whose isotopy class is not a product of Dehn twists but its
square (composition with itself) is a Dehn twist about a separating circle which bounds a
one holed Klein bottle (c.f. see \cite{K,SB,SM}).

First we give some easy observations:
\begin{Lemma}\label{Lemma-Y-hom}
The longest maximal chain in $\rm Mod (N_{2g+1})$  (respectively, in $\rm Mod (N_{2g})$) has length
$2g+1$ (respectively, $2g-1$).
The longest chain in the centralizer of a $Y$-homeomorphism in $\rm Mod (N_{2g+1})$ (respectively,
in $\rm Mod (N_{2g})$) has length $2g-1$ (respectively, $2g-3$).
\end{Lemma}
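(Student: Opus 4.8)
\textbf{Proof plan for Lemma~\ref{Lemma-Y-hom}.}
The plan is to build maximal chains explicitly on a fixed model of $N_g$, count their circles using the classification of chains in Lemma~\ref{chain}, and then cut down to the complement of a $Y$-homeomorphism. First I would recall that a $Y$-homeomorphism $y$ on $N_g$ has $y^2 = t_c$ for a circle $c$ bounding a one-holed Klein bottle $K$; any diffeomorphism commuting with $y$ (up to isotopy) can be chosen to preserve $K$ and hence to restrict to both $K$ and to $N_g^{c}$, which is a nonorientable surface of genus $g-2$ with one boundary component. Therefore a chain in the centralizer of $y$ can (after isotopy) be pushed entirely into $N_g^{c}$ together possibly with $c$ itself; since $c$ bounds a Klein bottle it is a separating two-sided circle, and a chain that uses $c$ can be extended inside $K$ by at most the single one-sided-core argument, which does not lengthen it as a chain of two-sided circles. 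So the maximal length of a chain in the centralizer equals the maximal length of a chain on a once-holed nonorientable surface of genus $g-2$, which in turn equals the maximal chain length on the closed surface $N_{g-2}$ obtained by capping the boundary (capping cannot decrease, and a maximal chain on $N_{g-2}$ misses a disk, so it lives in the once-holed surface too).

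So the whole lemma reduces to the first sentence: computing the longest maximal chain in $\operatorname{Mod}(N_m)$ for $m = 2g+1$ and $m=2g$. For this I would use the model of $N_m$ as a sphere with $m$ cross-caps and write down a standard ``linear'' chain $c_1, c_2, \ldots$ where consecutive circles meet once. On $N_{2g+1}$ one produces a chain of $2g+1$ circles that is separating (consistent with Lemma~\ref{chain}(i): maximal $\iff$ separating, and the length is odd); on $N_{2g}$ the maximal chain has length $2g-1$, again odd, and by Lemma~\ref{chain}(i) it is either separating or has orientable complement. An upper bound comes from a Euler-characteristic / rank count: cutting along a chain of $l$ two-sided circles, each complementary piece is a pair of pants or a one-holed Möbius band (as in the proof of Theorem~\ref{Chr-1}), so the total boundary count and $\chi(N_m) = 2-m$ force $l \le 2g+1$ (resp. $l \le 2g-1$); one checks no longer chain of pairwise non-isotopic circles can exist. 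Combining the explicit construction with this bound gives equality.

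Finally, plugging $m - 2$ back in: the longest centralizer chain in $\operatorname{Mod}(N_{2g+1})$ is the longest chain on $N_{(2g+1)-2} = N_{2g-1}$, i.e.\ of length $2(g-1)+1 = 2g-1$; and in $\operatorname{Mod}(N_{2g})$ it is the longest chain on $N_{2g-2} = N_{2(g-1)}$, of length $2(g-1)-1 = 2g-3$, exactly the claimed values. I expect the main obstacle to be the reduction step: justifying rigorously that \emph{every} chain in the centralizer of $y$ can be isotoped off the Klein-bottle piece $K$ and hence realized inside $N_g^{c}$, and that adjoining $c$ to such a chain never beats the bound. This needs a careful argument that a representative of any commuting mapping class preserves (up to isotopy) the canonical reducing circle $c$ of $y$ — which follows because $c$ is characterized as the boundary of the Klein bottle on which $y$ acts, i.e.\ $t_c = y^2$ is respected by the centralizer — and that a maximal chain on the once-holed genus-$(g-2)$ surface has the same length as on the closed surface; the capping-off comparison is the delicate point, handled by noting a maximal chain avoids a disk neighborhood of the capping point.
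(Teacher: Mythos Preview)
Your overall strategy for the centralizer statement is sound and matches the paper's key observation: since each Dehn twist $t_{a_i}$ in the chain commutes with the $Y$-homeomorphism $y$, it commutes with $t_c=y^2$, so the $a_i$ can be isotoped off $c$; and since the unique two-sided circle inside the one-holed Klein bottle $K$ has its Dehn twist \emph{anti}-commuting with $y$ (the relation $yt_ay^{-1}=t_a^{-1}$ in $\mathrm{Mod}(N_{2,1})$), the whole chain actually lives in $N_g\setminus K$. That reduction is correct.

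Where your write-up diverges from the paper, and where it gets shaky, is the Euler-characteristic step. You speak of ``cutting along a chain of $l$ two-sided circles'' and getting pairs of pants or one-holed M\"obius bands ``as in the proof of Theorem~\ref{Chr-1}.'' But the circles in a chain \emph{intersect}; you cannot cut along them, and Theorem~\ref{Chr-1} concerns a system of pairwise disjoint circles, a different object. The correct move---and this is exactly what the paper does---is to take a regular neighborhood $\nu$ of the union of the chain circles. Then $\nu$ is homotopy equivalent to a wedge of $k$ circles, so $\chi(\nu)=1-k$, and $\nu$ is an orientable subsurface with one or two boundary components according to the parity of $k$. Now $\chi(N_{2g+1}\setminus\nu)=(1-2g)-(1-k)=k-2g$, and for the centralizer case this complement must contain the one-holed Klein bottle $K$ (with $\chi(K)=-1$), which forces $k-2g\le -1$, i.e.\ $k\le 2g-1$. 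That is the paper's one-line argument; no capping off, no passage to $N_{g-2}$, and the ``delicate point'' you flagged simply does not arise.

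So your detour through $N_{g-2,1}\hookrightarrow N_{g-2}$ is not wrong, but it is unnecessary and it trades a clean inequality for the capping-off comparison you correctly identify as needing care (circles could in principle become isotopic after capping, and you have not ruled this out). Replace both the flawed ``cut along the chain'' step and the capping-off reduction with the tubular-neighborhood Euler-characteristic count above, and your proof becomes essentially the paper's.
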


\begin{proof}
We will give the proof of the odd genus case only.
The proof of the first statement is easy. For the second statement
suppose that there is a maximal chain in the centralizer of a
$Y$-homeomorphism in $\rm Mod (N_{2g+1})$ of length of $k \geq 2g$.
Let $\nu$ be a tubular neighborhood of the chain of circles the Dehn twists of the chain are about.
$\chi(\nu)=\chi(\vee_{k} S^1)=1-k$. A maximal chain should be
of odd length  and thus $k$ is odd. Then $\nu$ has two boundary components. Since $\nu$ is the
orientable surface of genus $(k-1)/2$ with two boundary components, $\Sigma_{(k-1)/2,2}$, we have
$\chi(N_{2g+1} \setminus \nu) = 2-(2g+1)-(1-k) = k-2g \geq 1 $. This
is a contradiction since $N_{2g+1} \setminus \nu$ must contain a one holed Klein bottle.
Hence, we get $k \geq 2g+1$. Finally, it is easy to see that chains of claimed
length exist in each case.
\end{proof}

Let $T\lhd \rm Mod (N_{g,n})$ denote the normal subgroup generated by Dehn twists.

\begin{Theorem}
An element $f \in \rm Mod (N_{2g})$ is a $Y$-homeomorphism if and
only if \\
i) $f \notin T$, $f^{2} = t_{e}$, where $t_{e}$ is a Dehn twist
about separating circle $e$.\\
ii) There is a chain of Dehn twist about nonseparating circles $D_1, \cdots, D_{2g-1}$
each of which has nonorientable complement, and
such that $D_{i} \in C_{\rm Mod
(N_{2g})}(t_{e})$,  for all $i \neq 2g-2$,
$D_{i} \in C_{\rm Mod (N_{2g})}(f)$, $i \leq 2g-3$.
\end{Theorem}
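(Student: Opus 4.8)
The plan is to prove both implications, the forward one by an explicit construction and the converse by restriction to the two sides of the circle $e$.

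\emph{The ``only if'' direction.} Let $f$ be a $Y$-homeomorphism, supported on a one-holed Klein bottle $K\subset N_{2g}$ with $\partial K=e$. Condition (i) is immediate from the definition, since the square of the crosscap slide on $K$ is the boundary twist $t_e$, and $e$ bounds a one-holed Klein bottle, hence is separating and (as $2g\geq 6$) nontrivial. The complement $N_{2g}\setminus K$ is the nonorientable surface $N_{2g-2,1}$, of nonorientable genus $2g-2\geq 4$; inside it I would place a chain $D_1,\dots,D_{2g-3}$ of two-sided nonseparating circles, each with nonorientable complement in $N_{2g}$, realizing the maximal length $2g-3$ allowed in the centralizer of a $Y$-homeomorphism by Lemma~\ref{Lemma-Y-hom}. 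Being disjoint from $e$ and from $\mathrm{supp}(f)$, these circles give Dehn twists commuting with both $t_e$ and $f$. I would then finish the chain with a circle $D_{2g-2}$ meeting $D_{2g-3}$ once and crossing $e$ (so it need not commute with $t_e$ or $f$) and a circle $D_{2g-1}$ lying in $K$, disjoint from $e$ and meeting $D_{2g-2}$ once (so it commutes with $t_e$ but not necessarily $f$); a figure makes it routine to arrange that $D_{2g-2}$ and $D_{2g-1}$ are still nonseparating with nonorientable complement.

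\emph{The ``if'' direction.} Assume (i) and (ii). Since $f^2=t_e$, $f$ commutes with $t_e$, so $f$ has a representative fixing $e$ setwise; as $e$ is separating, $N_{2g}^e=N_1\sqcup N_2$, each piece has exactly one boundary component, and $f$ preserves each piece. The subchain $D_1,\dots,D_{2g-3}$ commutes with $f$, hence with $t_e=f^2$, so its circles are disjoint from $e$; being a connected chain, they lie in a single piece, which I label $N_2$. Let $R\cong\Sigma_{g-2,2}$ be a regular neighborhood of this chain, so $\chi(R)=4-2g$. Using that the chain circles are nontrivial and pairwise non-isotopic — so the closure of $N_2\setminus R$ contains no disc or annulus component — one gets $\chi(N_2)\leq 3-2g$. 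On the other hand nontriviality of $e$ forces each $N_i$ to be neither a disc, an annulus, nor a M\"obius band, so $\chi(N_1)\leq -1$, i.e.\ $\chi(N_2)\geq 3-2g$. Hence $\chi(N_1)=-1$, and since $N_1$ has one boundary component, $N_1\cong\Sigma_{1,1}$ or $N_1\cong N_{2,1}$.

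\emph{Excluding $N_1\cong\Sigma_{1,1}$.} If this held, $(f|_{N_1})^2$ would be the boundary twist $t_e$ inside ${\rm Mod}(\Sigma_{1,1})\cong B_3$; a short computation in $B_3$ (the center is generated by $\Delta^2$, a square root of a power of $\Delta^2$ must be central, and among powers of $\Delta^2$ only $\Delta^2$ squares to $\Delta^4$) shows the unique square root of $t_e=(t_at_b)^6$ there is $(t_at_b)^3$, where $a,b$ form a $2$-chain in $N_1$ with $\partial$ of its regular neighborhood equal to $e$. Thus $f=(t_at_b)^3\cdot g$ with $g$ supported on $N_2$ and the two factors commuting, so $f^2=(t_at_b)^6g^2=t_e\,g^2$ by the $2$-chain relation, whence $g^2=1$. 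But ${\rm Mod}(N_2)$, the mapping class group of a surface with nonempty boundary, is torsion-free, so $g=1$ and $f=(t_at_b)^3\in T$, contradicting (i). Therefore $N_1\cong N_{2,1}$, i.e.\ $e$ bounds a one-holed Klein bottle, and together with (i) this is precisely the definition of a $Y$-homeomorphism.

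The main obstacle I expect is twofold: first, the explicit construction in the forward direction — checking that $D_{2g-2}$ and $D_{2g-1}$ can genuinely be completed to a chain with the asserted topological types and commutation pattern; second, making the estimate $\chi(N_2)\leq 3-2g$ fully rigorous, which requires ruling out degenerate placements of the chain (a boundary circle of its regular neighborhood isotopic to $e$, or a complementary component that is a disc or annulus). It is exactly here that the hypotheses that the $D_i$ are nonseparating with nonorientable complement, and that the full chain has the maximal length $2g-1$, get used; by contrast the $B_3$ square-root computation and the torsion-freeness of ${\rm Mod}$ of a bounded surface are standard and cause no difficulty.
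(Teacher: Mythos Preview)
Your Euler–characteristic estimate $\chi(N_2)\le 3-2g$ is the crux of the argument, and it does not follow from the facts you invoke. Knowing that the chain circles $D_1,\dots,D_{2g-3}$ are essential and pairwise nonisotopic in $N_2$ does \emph{not} force the boundary circles of the chain neighborhood $R\cong\Sigma_{g-2,2}$ to be essential in $N_2$. In particular, one component of $\partial R$ may bound a M\"obius band in $N_2$ while the other is parallel to~$e$; then $\overline{N_2\setminus R}$ is a M\"obius band together with a collar, so $\chi(N_2)=4-2g$ and hence $\chi(N_1)=-2$, i.e.\ $N_1\cong N_{3,1}$. This case is not hypothetical: in $N_6=N_{3,1}\cup_e N_{3,1}$ one can place a genuine chain $D_1,D_2,D_3$ of nonisotopic, nonseparating, two--sided circles with nonorientable complement inside one $N_{3,1}$, and take $f=V\in{\rm Mod}(N_{3,1})$ (the element with $V^2$ equal to the boundary twist) extended by the identity. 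Then $f\notin T$, $f^2=t_e$, and $f$ commutes with $D_1,D_2,D_3$, yet $e$ does not bound a one--holed Klein bottle. So the weakened hypothesis you actually use --- only $D_1,\dots,D_{2g-3}$ --- is \emph{not} sufficient, and your proof cannot close without bringing $D_{2g-2}$ and $D_{2g-1}$ into play.

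This is exactly where your route diverges from the paper's. The paper does not cut along $e$ first; it cuts $N_{2g}$ along the chain $D_1,\dots,D_{2g-3}$, asserts the complement is $N_{2,2}$, and then locates $e$ inside that $N_{2,2}$ and uses $D_{2g-1}$ (which commutes with $t_e$, lies in $N_{2,2}$, and has nonorientable complement) to pin $e$ down as the boundary of a one--holed Klein bottle. Your $B_3$ square--root computation is a pleasant alternative to the paper's use of $D_{2g-1}$ for excluding the torus case, but it only becomes relevant \emph{after} one has reduced to $\chi(N_1)=-1$, and that reduction is precisely the step that fails. (In fairness, the paper's assertion that the complement of the first $2g-3$ chain curves is the connected surface $N_{2,2}$ is itself stated without justification and would also need the full chain to rule out disconnected complements such as $M\sqcup N_{3,1}$; so the delicate point is the same in both approaches, but the paper at least signals that $D_{2g-1}$ is the tool.) To repair your argument you will need to use the existence of the extension $D_{2g-2},D_{2g-1}$ with the stated commutation and topological properties to exclude the $N_{3,1}$ possibility for $N_1$.
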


\begin{proof}
The tubular neighborhood of the union of the circles corresponding to the chain
$D_1,\cdots, D_{2g-3}$ is the orientable surface of genus $g-2$ with two boundary
components and hence, by assumption, $f$ is supported in $N_{2g} \setminus \Sigma_{g-2,2}$,
which is a copy of $N_{2,2}$ with boundary components, say $c_1$ and $c_2$.

Since $e$ is separating in $N_{2g}$ and is nontrivial, $e$ must separate
a Klein bottle. Also $D_{2g-1}$ and $e$ do not meet but they are
both contained in $N_{2,2}$ and hence we see that $D_{2g-1}$ is the unique
two-sided circle in the Klein bottle bounded by $e$. By assumption the
complement of $D_{2g-1}$ is nonorientable and hence complement of
Klein bottle must be nonorientable. Now, $f$ must be a product of
$t_{c_{1}}$, $t_{c_{2}}$ and some diffeomorphism supported in the
Klein bottle bounded by $e$. However, $f^{2}=t_{e}$ implies that $f$
has no $t_{c_{1}}$ and $t_{c_{2}}$ factors and hence must be
supported in the Klein bottle bounded by $e$.  Hence, it is a $Y$-homeomorphism.

The other part is easy and left to the reader.
\end{proof}

\begin{Theorem}
An element $f \in \rm{Mod}(N_{2g+1}) $ is a $Y$-homeomorphism if and
only if\\
i) $f \notin T$, $f^{2} = t_{e}$, where $t_{e}$ is a Dehn twist
about separating circle $e$.\\
ii) $C_{\rm Mod (N_{2g+1})}(t_{e})$ contains two chains of Dehn twists
$E_{1}$ and $D_{1}, \cdots, D_{2g-1}$ such that all but $E_1$ are about
nonseparating circles, satisfying
for all $i$, $[t_{D_{i}},t_{E_{1}}]=1$ and $D_{i} \in C_{\rm Mod
(N_{2g+1})}(f)$.\\
iii) The chain $D_{1}, \cdots, D_{2g-1}$ is separating but does
not separate a disc and the chain $E_1$ cannot be chosen of bigger length.
\end{Theorem}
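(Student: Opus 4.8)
The plan is to follow the template of the preceding (even genus) theorem, with the role played there by the sphere with four holes now played by a one-holed Klein bottle, and using the fact (Remark~\ref{Ch-3.2}(2)) that on a closed nonorientable surface of odd genus every two-sided nonseparating circle has nonorientable complement. Throughout, ``chain'' means a chain of two-sided circles, and I will use repeatedly: a chain of odd length $\ell$ has a regular neighborhood diffeomorphic to $\Sigma_{(\ell-1)/2,2}$; if the circles of a chain fill an orientable subsurface $\nu$ then any mapping class commuting with all the corresponding twists preserves $\nu$ and restricts on $\nu$ to a product of Dehn twists about the chain circles and $\partial\nu$ (note that the relation $f t_{D_i} f^{-1}=t_{D_i}$ by itself forces $f$ to preserve the local orientation along each $D_i$, since otherwise $f t_{D_i}f^{-1}=t_{D_i}^{-1}$, so no orientation-reversing finite-order contribution on $\nu$ can occur); and $\mathrm{Mod}$ of a M\"obius band is trivial while $\mathrm{Mod}(\Sigma_{1,1})$ and $\mathrm{Mod}(N_{2,1})$ are torsion free (sporadic cases, cf.\ the last section and Proposition~\ref{sporadic cases}).

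\textbf{The ``only if'' direction.} Let $f$ be a $Y$-homeomorphism, supported in a one-holed Klein bottle $K\subset N_{2g+1}$ with $\partial K=e$ and $f^2=t_e$; then (i) is immediate from the definition. Writing the complement $N_{2g+1}\setminus K\cong N_{2g-1,1}$ as $\nu\cup M$, with $\nu$ orientable and $M$ a M\"obius band glued along one of the two boundary circles of $\nu$, a count of crosscaps gives $\nu\cong\Sigma_{g-1,2}$, so one may choose a chain $D_1,\dots,D_{2g-1}$ whose circles fill $\nu$. Each $D_i$ is then nonseparating in $N_{2g+1}$, disjoint from $e$ and from a representative of $f$, and $f$ (being a crosscap slide) fixes the isotopy class of the unique two-sided nonseparating circle $c$ of $K$; taking $E_1=\{c\}$, both chains lie in $C_{\mathrm{Mod}(N_{2g+1})}(t_e)$, and satisfy $[t_{D_i},t_{E_1}]=1$ and $D_i\in C_{\mathrm{Mod}(N_{2g+1})}(f)$ for all $i$, which is (ii). Finally the chain $D_1,\dots,D_{2g-1}$ cuts $N_{2g+1}$ into $K$ and $M$, hence is separating and does not separate a disc; and since $\Sigma_{1,1}$ does not embed incompressibly in $N_{2,1}$, a one-holed Klein bottle (like a M\"obius band) admits no chain of length $>1$, so $E_1$ is of maximal length. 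This establishes (iii).

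\textbf{The ``if'' direction.} Assume (i)--(iii). Since $f$ commutes with every $t_{D_i}$, a representative of $f$ preserves a regular neighborhood $\nu$ of the union of the chain circles, with $\nu\cong\Sigma_{g-1,2}$, and $f|_\nu$ is a product of Dehn twists about the $D_i$ and $\partial\nu$. Now $\chi(\overline{N_{2g+1}\setminus\nu})=-1$, and since the chain separates (by (iii)), $\overline{N_{2g+1}\setminus\nu}$ is a disjoint union of two one-boundary surfaces whose Euler characteristics sum to $-1$; by (iii) neither is a disc, so one is a M\"obius band $M$ and the other, $R$, is $\Sigma_{1,1}$ or $N_{2,1}$. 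The maximality of $E_1$ in (iii) excludes $R=\Sigma_{1,1}$, since a genus-one piece would carry a chain of length $2$ still commuting with $t_e$ and all $t_{D_i}$. With $R=N_{2,1}$: $e$ is disjoint from the filling system of $\nu$ (as $[t_e,t_{D_i}]=1$) and, being separating, is isotopic to $\partial R$ or to $\partial M$; restricting $f$ to the disjoint pieces $\nu$, $R$, $M$ and combining $f^2=t_e$, $f\notin T$, the triviality of $\mathrm{Mod}(M)$ and the structure of $\mathrm{Mod}(N_{2,1})$, the case $e=\partial M$ is ruled out and $f$ is forced to be supported in $R=N_{2,1}$ with $(f|_R)^2=t_{\partial R}$ and $f|_R\notin T$; that is, $f$ is a $Y$-homeomorphism.

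\textbf{Main obstacle.} The delicate step is the topological bookkeeping in the ``if'' direction: carrying out the case analysis on the two complementary pieces of the chain and, in particular, excluding $R=\Sigma_{1,1}$ (which is where the maximality clause for $E_1$ in (iii) is genuinely used) and excluding the possibility that $e$ bounds the M\"obius band rather than the one-holed Klein bottle piece (which is where $f\notin T$ is essential, via the fact that no non-twist element of $\mathrm{Mod}(N_{2,1})$ has square equal to a boundary twist except a crosscap slide). Establishing existence of the required chains in the ``only if'' direction --- including that a length $2g-1$ chain filling $\Sigma_{g-1,2}$ can be drawn inside $N_{2g+1}\setminus K$ --- is routine and is best displayed by a picture.
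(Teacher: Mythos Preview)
Your proof is correct and follows essentially the same line as the paper's: both compute $\chi(N_{2g+1}\setminus\nu)=-1$, identify the two complementary pieces as a M\"obius band and either $\Sigma_{1,1}$ or $N_{2,1}$, invoke the maximality of $E_1$ to exclude $\Sigma_{1,1}$, and conclude that $f$ is supported in the one-holed Klein bottle with $f^2=t_{\partial N_{2,1}}$. The only notable difference is in how you localize $f$: the paper isotopes $f$ to fix the chain $D_1\cup\cdots\cup D_{2g-1}$ pointwise and concludes directly that $f$ is supported outside $\nu$, whereas you first record that $f|_\nu$ lies in the twist subgroup generated by the $t_{D_i}$ and boundary twists, and then use $f^2=t_e$ together with $f\notin T$ to force the $t_{D_i}$-exponents to vanish; both reach the same conclusion, and your version makes the role of the hypothesis $f\notin T$ a bit more visible. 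You are also more explicit than the paper on two points it leaves to the reader: the ``only if'' direction and the exclusion of $e=\partial M$.
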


\begin{proof}
Let $\nu$ be the tubular neighborhood of $\bigcup D_{i}$.
Then $\chi (\nu)=2-2g$ and hence,
$$\chi(N_{2g+1} \setminus \nu)=2-(2g+1)-(2-2g)=-1.$$ Also,
$\nu$ and hence $N \setminus \nu$ has two boundary components
(the chain $D_{1}, \cdots, D_{2g-1}$ has odd number of elements). Since
it is also a separating chain we see that
$N_{2g+1} \setminus \nu = N_{1,1}
\cup N_{2,1}$ or $N_{1,1}
\cup \Sigma_{1,1}$. So, $e$ must be the boundary of $N_{2,1}$ or
$\Sigma_{1,1}$. The second case is not realizable by the condition (iii)
because otherwise $e$ would bound a one holed torus and we could replace
the chain $E_1$ with a longer chain $E_1, \ E_2$ in the one holed torus.

Since $[f,D_{1}]=1$, we can isotope $f$ to fix $D_{1}$ pointwisely. Since
$[f,D_{2}]=1$ and $f(D_{1} \cap D_{2}) = D_{1} \cap D_{2}= {pt}$, we
can isotope $f$ to fix $D_{1} \cup D_{2}$ pointwisely. Similarly, we
can assume that $f$ fixes $D_{1} \cup \cdots \cup D_{2g-1}$
pointwisely. Hence, $f$ is supported outside of $\nu$, because $f$
cannot interchange the two sides of the tubular neighborhood. Moreover, since
$N_{2g+1} \setminus \nu = N_{1,1} \cup N_{2,1}$ and $N_{1,1}$ does not
support any nontrivial diffeomorphism which is identity on the
boundary we see that $f$ is supported in $N_{2,1}$. So,
$f$ is an element of $\rm{Mod}(N_{2,1})$ which is not a Dehn twist
so that $f^{2} = t_{e}$, where $e=\partial N_{2,1}$.
In particular, $E_1$ is the Dehn twist about the unique (up to isotopy)
two-sided circle, say $a$, in the one holed Klein bottle bounded by $e$.
Hence $f$ is a Y-homeomorphism.

The other direction is left to the reader.
\end{proof}

\begin{Remark}\label{Y-hom}
Using the notation of the above proof, let us continue our investigation of
Y-homeomorphisms in the mapping class group of the Klein bottle with one
boundary component. We have the presentation (\cite{K,SM}) $$\rm{Mod}
(N_{2,1})= \langle y, t_{a} \mid yt_{a}y^{-1}=t_{a}^{-1} \rangle =
\mathbb{Z} \rtimes \mathbb{Z},$$ where $t_a$ is the Dehn twist $E_1$.
One can easily see that $$C(\rm{Mod}(N_{2,1}))=\langle t_{b}
\rangle \cong \mathbb{Z},$$ where $t_{b}$ is $y^{2}$.

It is not hard to see that, any word in $\mathbb{Z} \rtimes \mathbb{Z}$
has the form $y^{m}t_{a}^{n}$ for $m, n \in \mathbb{Z}$, and this form is unique.
Let $x=y^{m}t_{a}^{n}$. Then $x^{2}=y^{m}t_{a}^{n}y^{m}t_{a}^{n}$. So,
$x^{2}$ equals $y^{2m} t_{a}^{2n}$ if $m$ is even, or $y^{2m}$ if
$m$ is odd. Hence, any  $x=yt_{a}^{n}$ satisfies the equation
$x^{2}=y^{2}$, $n \in \mathbb{Z}$. Since $t_{a}^{-n}yt_{a}^{n} =
yt_{a}^{2n}$ up to conjugation $y^{\pm}$ and $y^{\pm}t_{a}$ are the
only $Y$-homeomorphisms supported inside a punctured Klein bottle.
\end{Remark}

\section{Outer Automorphisms of ${\rm {Mod}}(N_g)$}\label{OutAut}

\begin{Theorem}\label{MainThm1}
$\rm Out( {\rm {Mod}}(N_{2g+1}))$ injects into $\mathbb{Z}$ if $g \geq 2$.
\end{Theorem}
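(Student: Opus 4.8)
The plan is to follow Ivanov's template for orientable surfaces, adapted to the nonorientable setting using the algebraic characterizations already established. Let $\Phi \in \mathrm{Aut}(\mathrm{Mod}(N_{2g+1}))$. The key fact in hand is that, by Theorem~\ref{Chr-1}, Theorem~\ref{Chr-2.1} and Remark~\ref{Ch-3.2}(2), together with the chain results (Lemma~\ref{chain}, Corollary~\ref{Ch3.3}, Corollary~\ref{chains-bd}), $\Phi$ sends Dehn twists to Dehn twists and respects the topological type of the underlying circle; moreover it preserves geometric intersection patterns of circles (since commuting/braiding of twists is detected algebraically, by Stukow's results quoted in Section~\ref{Prelim}). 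So $\Phi$ induces an automorphism $\Phi_*$ of the ``curve complex'' of $N_{2g+1}$ — or rather of the appropriate simplicial complex of isotopy classes of circles with their topological types and intersection data. First I would invoke (or prove, following Ivanov's argument on the orientable side and its nonorientable analogue) that every automorphism of this complex is \emph{geometric}, i.e.\ induced by some diffeomorphism $H$ of $N_{2g+1}$.

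Next I would use $H$ to normalize $\Phi$: replacing $\Phi$ by $\mathrm{inn}(h^{-1})\circ\Phi$, where $h = [H] \in \mathrm{Mod}(N_{2g+1})$, I may assume that $\Phi$ fixes the isotopy class of every circle, and in particular fixes every Dehn twist \emph{up to the orientation ambiguity} $t_c \mapsto t_c^{\pm 1}$ (the twist about a two-sided circle on a nonorientable surface has no canonical sign, since there is no global orientation to pin it down). This sign ambiguity is exactly the source of the $\mathbb{Z}$ in the statement. The next step is to show the signs are globally coherent: if $c, c'$ are two-sided circles meeting once, then $t_c t_{c'} t_c = t_{c'} t_c t_{c'}$ forces $\Phi(t_c) = t_c^{\epsilon}$ and $\Phi(t_{c'}) = t_{c'}^{\epsilon}$ with the \emph{same} sign $\epsilon$, since the braid relation is not satisfied by $t_c$ and $t_{c'}^{-1}$. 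Because the graph whose vertices are (isotopy classes of) two-sided nonseparating circles and whose edges join circles meeting once is connected on $N_{2g+1}$ (for $g \geq 2$, $2g+1 \geq 5$), the sign $\epsilon$ is a single global invariant $\epsilon(\Phi) \in \{\pm 1\}$: after normalization, $\Phi(t_c) = t_c^{\epsilon(\Phi)}$ for all two-sided circles $c$ simultaneously.

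It then remains to show that this data determines $\Phi$ and that the resulting map $\mathrm{Out}(\mathrm{Mod}(N_{2g+1})) \to \mathbb{Z}$ is an injective homomorphism. For the first point: the subgroup $T \lhd \mathrm{Mod}(N_{2g+1})$ generated by all Dehn twists is algebraically characterized (Remark~\ref{Ch-3.2}(1)), so $\Phi(T) = T$; an automorphism that fixes every circle and acts on every twist by a fixed power $\epsilon = \pm 1$ is, on $T$, either the identity (if $\epsilon = 1$) or the ``inversion'' automorphism $t_c \mapsto t_c^{-1}$ (if $\epsilon = -1$). One checks $\mathrm{Mod}(N_{2g+1})$ is generated by twists together with at most a bounded number of extra elements (e.g.\ crosscap slides / $Y$-homeomorphisms), and using the characterization of $Y$-homeomorphisms (the two preceding theorems) that $\Phi$ is pinned on those as well; this forces $\Phi = \mathrm{id}$ or $\Phi = $ inversion, and hence the normalized $\Phi$ lies in a group that injects into $\mathbb{Z}/2$ or $\mathbb{Z}$ — in fact one shows the inversion automorphism is \emph{not} realized by a diffeomorphism and is of infinite order in $\mathrm{Out}$ only in the appropriate bookkeeping, which is where the $\mathbb{Z}$ rather than $\mathbb{Z}/2$ enters (cf.\ the Remark~\ref{Y-hom} computation in $\mathrm{Mod}(N_{2,1})$, where $y^2 = t_b$ records a square-root phenomenon). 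Assembling: the assignment $\Phi \mapsto \epsilon(\Phi)$-type data descends to an injection $\mathrm{Out}(\mathrm{Mod}(N_{2g+1})) \hookrightarrow \mathbb{Z}$.

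\medskip

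The step I expect to be the main obstacle is the ``geometricity'' input — proving that every automorphism of the relevant circle complex of $N_{2g+1}$ is induced by a diffeomorphism. On orientable surfaces this is Ivanov's theorem, and the nonorientable analogue requires genuinely re-running his inductive argument (cutting along circles, controlling how pairs of pants and the nonorientable pieces $N_{1,2}$, $N_{1,3}$, $N_{2,1}$ fit together, and handling the low-genus exceptional pieces flagged in the Remark of Section~\ref{Prelim}), while carefully tracking one-sided versus two-sided and separating versus nonseparating types. The genus hypothesis $2g+1 \geq 5$ is exactly what makes the complex connected and rigid enough for this to go through; the bookkeeping of the global sign $\epsilon$ and the $Y$-homeomorphism contribution is comparatively routine once geometricity is in place.
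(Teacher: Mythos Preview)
Your proposal has two genuine gaps.

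First, you invoke geometricity of the curve-complex automorphism as a black box (``every automorphism of this complex is geometric''). This is a substantial theorem in its own right for nonorientable surfaces and is not available in the paper. The paper avoids it entirely: instead of working with the whole complex, it fixes one explicit generating tree of Dehn twists $t_{a_1},\dots,t_{a_{4g-2}}$ (together with one $Y$-homeomorphism $U$), uses the results of Section~\ref{Dehn} to show $\Phi$ sends this tree to another tree of the same combinatorial type with the same disc-bounding subchains, and then builds the normalizing diffeomorphism $\psi$ by hand---extending from the tree across the complementary discs and the single one-holed $\mathbb{RP}^2$. This is much less than full curve-complex rigidity and is all that is needed.

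Second, and more seriously, you have misidentified where the $\mathbb{Z}$ comes from. Your global sign $\epsilon(\Phi)\in\{\pm1\}$ on Dehn twists would land in $\mathbb{Z}/2$, and your attempt to promote it to $\mathbb{Z}$ (``the inversion automorphism \dots\ is of infinite order in $\mathrm{Out}$ only in the appropriate bookkeeping'') does not make sense---an involution cannot have infinite order. In the paper the sign ambiguity on twists is absorbed into the construction of $\psi$ (by choosing coherent local orientations along the tree), so after normalization $\Phi$ fixes every $t_{a_i}$ \emph{exactly}. The integer invariant arises from what $\Phi$ does to the $Y$-homeomorphism generator: one shows the image $\Phi(U)$ (equivalently $\Phi(V)$ with $V=A_3U$) is supported in a specific $N_{3,1}$ piece, and then a direct computation with Szepietowski's presentation of $\mathrm{Mod}(N_{3,1})$ forces $\Phi(V)=(A_1B)^{3m}V$ for some $m\in\mathbb{Z}$. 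That $m$ is the injection into $\mathbb{Z}$. Your proposal hand-waves this step (``$\Phi$ is pinned on those as well''), but it is precisely where the content of the theorem lives and where the target group is actually computed.
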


\begin{proof}
Consider the maximal tree of Dehn twists in Figure~\ref{grp2}.

\begin{figure}[hbt]
  \begin{center}
    \includegraphics[width=8cm]{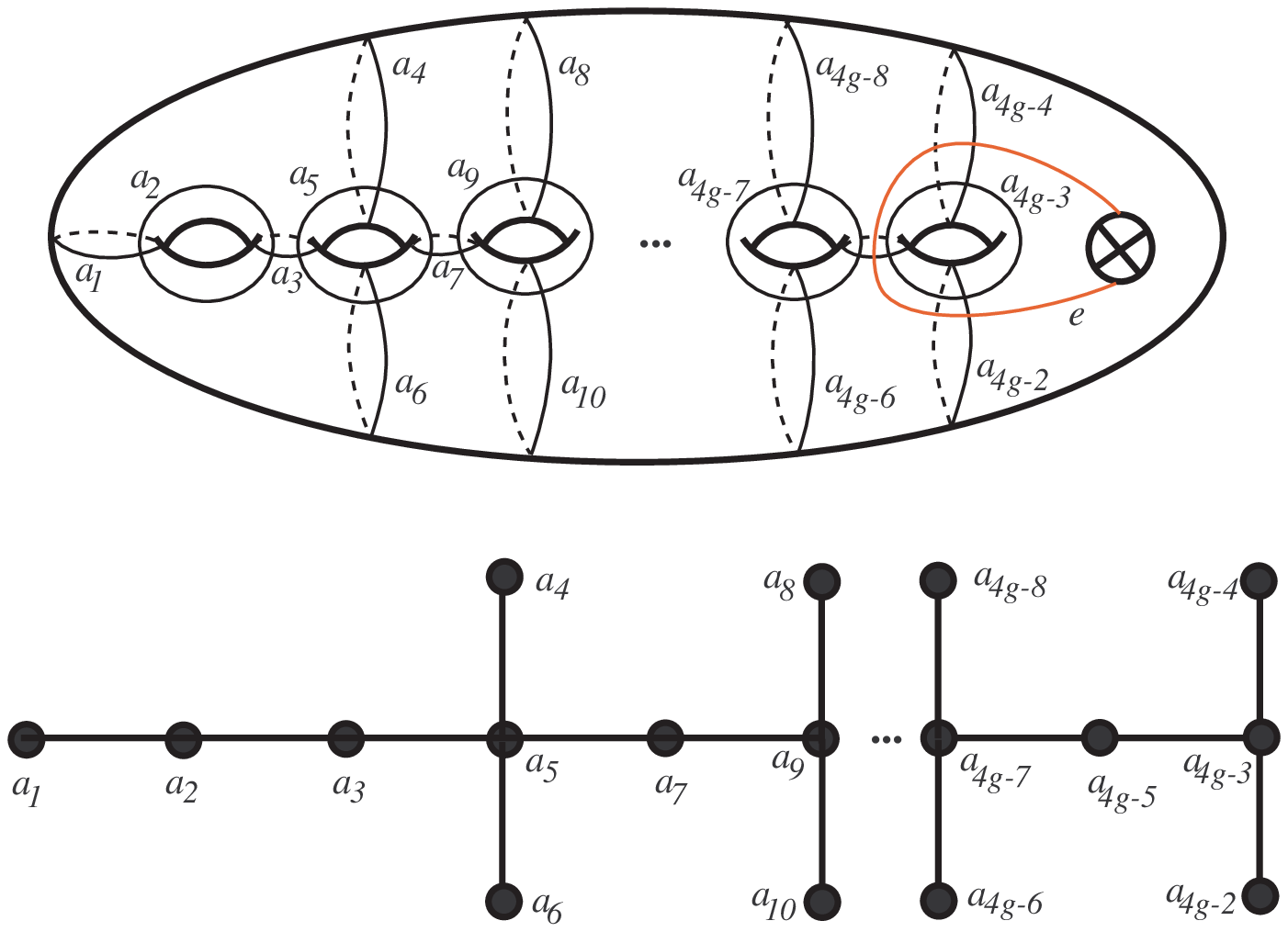}
    \caption{}
    \label{grp2}
  \end{center}
 \end{figure}

It is well known that the Dehn twists $t_{a_i}$, $i=1, \ldots, 4g-2$
and a Y-homeomorphism, say $U$, supported in the tubular neighborhood of
the union of the circles $a_{4g-4}$ and $e$ (which is a copy of one holed
Klein bottle) generate the mapping class group ${\rm {Mod}}(N_{2g+1})$
(c.f. see \cite{K,SM}).

Let $\Phi: {\rm {Mod}}(N_{2g+1}) \to {\rm {Mod}}(N_{2g+1})$ be an
automorphism. By the results of the previous section the image
of this tree, which we denote by $T$, will be again a tree, where
each maximal chain in $T$ bounding a disc will be sent to a maximal
chain in $\Phi(T)$ bounding a disc. Indeed, to get the maximal tree of
circles corresponding to the maximal tree of Dehn twists $\Phi (t_{a_i})$,
$i=1, \ldots, 4g-2$, we can choose a hyperbolic metric on $N_{2g+1}$, and then
let $b_i$ be the unique geodesic in the homotopy class of a circle corresponding
to the Dehn twist $\Phi (t_{a_i})$. In particular, $\Phi(t_{a_{i}}) =
t_{b_{i}}$, for each $i$ (see also \cite{I1}).

The tubular neighborhood of the tree, say $\nu$, of $T$ is an orientable
surface. Choose an orientation on $\nu$ and on each $a_{i}$ so that the
intersections $(a_{i},a_{j})$ ($i \leq j$) is compatible with the
orientation.  Choose an orientation of a tubular neighborhood of the
tree $\{b_{1}, \cdots, b_{4g-2} \}$ and a diffeomorphism  of $a_{1}$
onto $b_{1}$. Keeping track of orientations we can extend this
diffeomorphism to whole of $T$ and hence to a neighborhood of $T$,
call it again $\nu$. This diffeomorphism extends to a diffeomorphism
of the surface since we need to extend through discs and a one holed
real projective plane.

Say, $\psi: N_{2g+1} \to N_{2g+1}$ is one such diffeomorphism. Composing $\Phi$ with
$\psi_{*}^{-1}$, we may assume that $\Phi$ fixes all $t_{a_{i}}$.
Let $U' = \Phi(U)$, where $U$ is as above. Since $U$ commutes with
all $t_{a_{i}}$ for $i \leq 4g-6$, so is $U'$. Since a power of $t_{C}$
is a composition of $t_{a_{i}}$'s, $i \leq
4g-6$, $U'$ commutes with $t_{C}$ also (Figure~\ref{grp2c}). We can isotope $U'$ to fix
$C$ pointwisely. Note that $U'$ cannot interchange the two sides of
a tubular neighborhood of $C$ since the genus of $N_{2g+1}$ is odd. So, we
may assume that $U'$ is identity on a tubular neighborhood of $C$.
So, $U'$ induces a diffeomorphism on $N_{2g+1}^C$ which is the disjoint
union of $\Sigma_{g-1,1}$ and $N_{3,1}$.

\begin{figure}[hbt]
  \begin{center}
    \includegraphics[width=8cm]{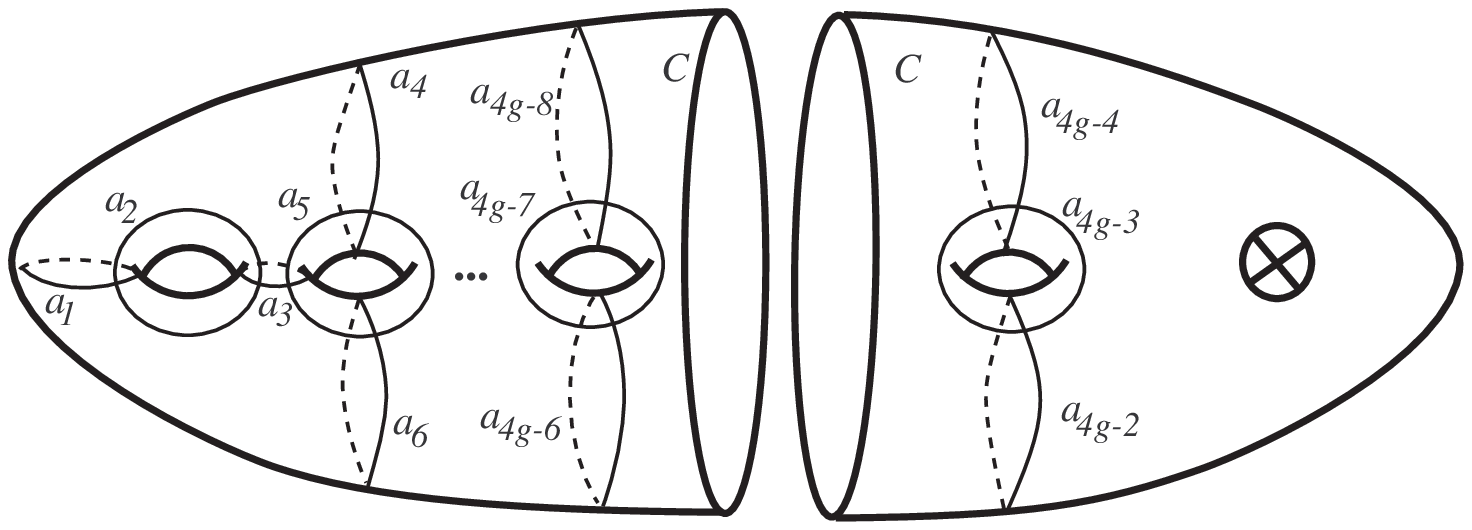}
    \caption{}
    \label{grp2c}
  \end{center}
 \end{figure}

Since $U'$ commutes with all $t_{a_{i}}$'s, $i \leq 4g-6$,
with $t_{C}$, and the center of ${\rm {Mod}}(\Sigma_{g-1,1})$ is $\langle t_{C}
\rangle$ we may assume that after an isotopy, $U'$ is identity on
$\Sigma_{g-1,1}$. Hence, it is supported on $N_{3,1}$.

Now, we need to study ${\rm {Mod}}(N_{3,1})$.  Let us rename the circles
$a_{4g-4}$, $a_{4g-5}$, $a_{4g-3}$ and $a_{4g-2}$, and the
Dehn twists about them by the letters $A_{1}$, $A_{2}$, $A_{3}$ and $B$, respectively.
It is known that ${\rm {Mod}}(N_{3,1})$ has the following presentation ~\cite{SB}:

\begin{figure}[hbt]
  \begin{center}
    \includegraphics[width=5cm]{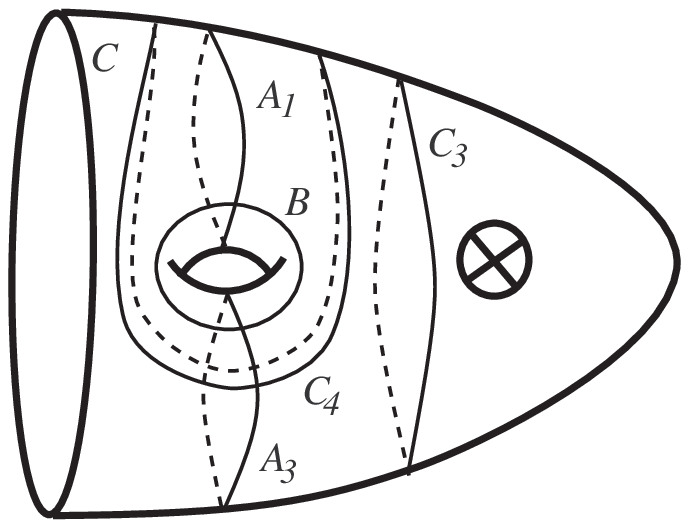}
    \caption{}
    \label{3rp2-1bc}
  \end{center}
 \end{figure}

Generators:$A_{1}$, $A_{3}$, $B$ and $U$.\\
Relations:\\
$1)$ $A_{1} A_{3} = A_{3} A_{1}$\\
$2)$ $A_{1} B A_{1} = B A_{1} B$, and $A_{3} B A_{3} = B A_{3} B$\\
$3)$ $UA_{1}U^{-1} = A_{1}^{-1}$\\
$4)$ $UBU^{-1} = A_{3}^{-1}B^{-1}A_{3}$\\
$5)$ $(A_{3}U)^{2} = (UA_{3})^{2} = (A_{1}^{2}A_{3}B)^{3}$($=C$).

\bigskip
\noindent It turns out that replacing $U$ with $V=A_{3}U$ gives a convenient
set of generators and relations:

Generators: $A_{1}$, $A_{3}$, $B$ and $V$.\\
Relations:\\
$1')$ $A_{1} A_{3} = A_{3} A_{1}$\\
$2')$ $A_{1} B A_{1} = B A_{1} B$, and $A_{3} B A_{3} = B A_{3} B$\\
$3')$ $VA_{1}V^{-1} = A_{1}^{-1}$\\
$4)$ $VBV^{-1} = B^{-1}$\\
$5')$ $V^{2}A_{3} = A_{3}V^{2}$\\
$6')$ $V^{2} = (A_{1}^{2} A_{3} B)^{3}$($=C$).

(Note that $5')$ follows from $6')$).

Since all these generators commute with $t_{C}$, $\Phi$ maps ${\rm {Mod}}(N_{3,1})$
into itself. Indeed, we have $\Phi(A_{1})=A_{1}$, $\Phi(A_{3})=A_{3}$ and $\Phi(B)=B$.

To make things easier we will consider the subgroup of ${\rm {Mod}}(N_{3,1})$
generated by all Dehn twists. We will show that this
subgroup, denoted by $L$, is a normal subgroup of index $2$.

Let  $W=V^{-1}A_{3}V$ which is $VA_{3}V^{-1}=U^{-1}A_{3}U$. Note
that $A_{1}W^{i} = W^{i}A_{1}$ and $W^{i}V = VA_{3}^{i}$, for all $i$. Also,
$B^{-1}WB^{-1} = WB^{-1}W$ and $BW^{-1}B=W^{-1}BW^{-1}$. Set $L =
\langle A_{1}, \,A_{3},\, B,\, W \rangle$. It is easy to see that
this is a normal subgroup of ${\rm {Mod}}(N_{3,1})$ generated by Dehn
twists and hence $V \notin L$. Nevertheless, $V^{2} \in L$ and hence
$L$ has index two in ${\rm {Mod}}(N_{3,1})$. So, $L$ must be the
subgroup generated by all Dehn twists.

Now, any element of ${\rm {Mod}}(N_{3,1})$ which is not a product of a Dehn twist
has the form $DV$ for some $D \in L$. In particular, $\Phi(V)=DV$ for some $D$.
Since $\Phi(V^{2})=V^{2}$, $(V^{2} = (A_{1}^{2} A_{3} B)^{3})$, we
see that $V^{2} = (DV)^{2} = DV DV = DV DV^{-1} V^{2} = DD'V^{2}$,
where $D' = VDV^{-1}$. So, $DD'=id$.

($D'$ is obtained from $D$ by replacing $A_{1}$ with $A_{1}^{-1}$,
$B$ with $B^{-1}$, $W$ with $A_{3}$ and $A_{3}$ with $W$.)

Apply $\Phi$ to $V^{-1}A_{1}V = A_{1}^{-1}$ to get
$V^{-1}D^{-1}A_{1}DV = A_{1}^{-1} = V^{-1}A_{1}V$. It follows that
$A_{1}D = DA_{1}$. Similarly, apply $\Phi$ to $V^{-1}BV=B^{-1}$ to
get $BD = DB$. So, $D$ maps a tubular neighborhood of $A_{1}$ and
$B$ diffeomorphically onto itself. By composing $D$ with
$(A_{1}B)^{3} = (BA_{1})^{3}$ if necessary we may assume that $D$ is
identity on a tubular neighborhood of $A_{1}$ and $B$, where $C_{4}$
is the boundary of the tubular neighborhood of the union of $A_{1}$ and $B$.

The complement of the tubular neighborhood of the union of $A_{1}$ and $B$
is a projective plane with two boundary components; let us call it
$N_{1,2}$. Since ${{\rm {Mod}}}(N_{1,2}) = \mathbb{Z} \bigoplus \mathbb{Z}
= \langle C \rangle \bigoplus \langle C_{4} \rangle$, we get $D =
C^{n}(A_{1}B)^{3m}$ for some $m$ and $n$ $\in \mathbb{Z}$. Now, use
the relation $VDV^{-1} = D^{-1}$ to get $C^{n}(A_{1}B)^{-3m} =
C^{-n}(A_{1}B)^{-3m}$. It follows that $n=0$. So, $D=(A_{1}B)^{3m}$.
Hence, we associate an integer, namely $m$ to each element of
$\rm Out({{\rm {Mod}}}(N_{2g+1}))$. Indeed, this is a group homomorphism
$\rm Out({{\rm {Mod}}}(N_{2g+1})) \to \mathbb{Z}$. To see this first note
the following. Let $\Phi_{1}$ and $\Phi_{2}$ $\in \rm Out({{\rm {Mod}}}(N_{2g+1}))$,
so that $\Phi_{1}$ and $\Phi_{2}$ are identity on
all $t_{a_{i}}$, $A_{i}$, $B$ and $\Phi_{1}(V)= (A_{1}B)^{3m_{1}}V$
and $\Phi_{2} (V)= (A_{1}B)^{3m_{2}}V$. So, $(\Phi_{1} \circ
\Phi_{2})(V)= (A_{1}B)^{3(m_{1}+m_{2})}V$.

To finish the proof of the theorem we need to show that such an
automorphism (if it exist) cannot be an inner automorphism.
To see that this is not an inner automorphism note the following: Any
diffeomorphism which commutes with the all $t_{a_{i}}$ is isotopic to the
identity on the orientable component of $N_{2g+1}-C_3$. So, it
must be isotopic to the identity on the whole surface $N_{2g+1}$.
However, the automorphism induced by the identity diffeomorphism is the identity.
So, it should send $V$ to $V$ also.  It is easy to see in this case that the exponent
$m$ of $A_1B$ must be zero. In particular, the homomorphism
$\rm Out({{\rm {Mod}}}(N_{2g+1})) \to \mathbb{Z}$ is injective.

\end{proof}

\begin{Theorem}\label{MainThm2}
$\rm Out({{\rm {Mod}}}(N_{2g}))$ injects into ${{\rm {Mod}}}(\Sigma_{0,4})$, for $g\geq 3$.
\end{Theorem}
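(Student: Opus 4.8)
The strategy mirrors the odd-genus argument of Theorem~\ref{MainThm1}, but now the ``leftover'' piece on which an automorphism can act nontrivially will be a sphere with four holes rather than a one-holed Klein bottle, which is why the target changes from $\mathbb{Z}$ to ${\rm Mod}(\Sigma_{0,4})$. First I would fix a maximal tree of Dehn twists on $N_{2g}$ analogous to Figure~\ref{grp2}, whose tubular neighborhood $\nu$ is an orientable subsurface and whose complement in $N_{2g}$ is a union of discs together with the nonorientable part forced by the parity of the genus. By Stukow's results and the characterization theorems of Section~\ref{Dehn} (in particular Theorem~\ref{Chr-1}, Theorem~\ref{Chr-2.2}, Corollary~\ref{Ch3.3} and Corollary~\ref{chains-bd}), any automorphism $\Phi$ of ${\rm Mod}(N_{2g})$ carries this tree of Dehn twists to another tree of Dehn twists with the same combinatorics and the same topological types of complements, so $\Phi(t_{a_i}) = t_{b_i}$ for geodesic representatives $b_i$.

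Next, keeping track of orientations on $\nu$ and on a tubular neighborhood of the new tree $\{b_i\}$, one builds a diffeomorphism $\psi$ of $N_{2g}$ matching the two trees and extending over the complementary discs; composing $\Phi$ with $\psi_*^{-1}$ we may assume $\Phi$ fixes every $t_{a_i}$. I would then identify, as in the odd case, a separating circle $C$ (a boundary of a sub-neighborhood of part of the tree) whose Dehn twist is a product of the $t_{a_i}$, so that $\Phi$ fixes $t_C$ and therefore preserves the centralizer of $t_C$. Cutting along $C$ exhibits $N_{2g}$ as the union of a large orientable surface $\Sigma_{g-2,1}$ (whose mapping class group has center generated by $t_C$, forcing $\Phi$ to act as the identity on it after isotopy) and a complementary nonorientable piece $N_{2,2}$, i.e.\ a Klein bottle with two holes; actually, to land in ${\rm Mod}(\Sigma_{0,4})$ one arranges the tree so that the residual piece on which $\Phi$ can be nontrivial is a four-holed sphere $\Sigma_{0,4}$, using the even-genus chain analysis (two maximal chains of lengths $g-1$ and $1$ in the separating case versus one chain of length $g-1$) to pin down which subsurface is left over.

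The core of the argument is then a presentation computation on this residual piece, exactly parallel to the ${\rm Mod}(N_{3,1})$ analysis: using a known presentation of the relevant mapping class group, one shows that $\Phi$ fixes all the Dehn-twist generators, so $\Phi$ is determined by its effect on the single ``extra'' generator (the analogue of $V$, or a mapping class of $\Sigma_{0,4}$ not in the twist subgroup), and the relations force $\Phi$ of that generator to be that generator multiplied by a well-defined element of ${\rm Mod}(\Sigma_{0,4})$. Composition of automorphisms corresponds to multiplication in ${\rm Mod}(\Sigma_{0,4})$, giving a homomorphism ${\rm Out}({\rm Mod}(N_{2g})) \to {\rm Mod}(\Sigma_{0,4})$. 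Finally, injectivity follows as before: a representative diffeomorphism commuting with all the $t_{a_i}$ must be isotopic to the identity on the large orientable component of the cut surface, hence isotopic to the identity on $N_{2g}$, so an inner automorphism of this type is trivial, and an automorphism in the kernel of the homomorphism fixes enough generators to be inner.

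\textbf{Main obstacle.} The hard part is the bookkeeping on the residual four-holed sphere: one must choose the tree of Dehn twists carefully enough that, after identifying the action on the orientable $\Sigma_{g-2,1}$ component as trivial, the remaining freedom is precisely an element of ${\rm Mod}(\Sigma_{0,4})$ and not something larger — this requires the even-genus chain lemmas (Lemma~\ref{chain}, Lemma~\ref{Lemma-Y-hom}) to control exactly which subsurfaces arise and to rule out permutations of the complementary pieces, since unlike the odd case the parity of the genus no longer automatically forbids $\Phi$ from swapping the two sides of a separating curve. Verifying that the induced map on ${\rm Out}$ is well-defined (independent of the choices of $\psi$ and of geodesic representatives) and multiplicative is routine but must be done with the same care as in the proof of Theorem~\ref{MainThm1}.
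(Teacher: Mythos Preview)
Your plan is essentially the paper's own argument: reduce by the Dehn-twist characterizations so that $\Phi$ fixes every $t_{a_i}$ in a generating tree, then localize the remaining freedom to the value of $\Phi$ on the single Y-homeomorphism generator $U_3$, and show that $\Phi(U_3)=U_3X$ with $X$ forced (by commutation with all $t_{a_i}$) to live in the mapping class group of a four-holed sphere obtained by cutting along $A_1', A_1'', A_3$. One small correction to your bookkeeping: you cannot cut $N_{2g}$ along a single separating circle $C$ into $\Sigma_{g-2,1}\cup N_{2,2}$ (the boundary-component counts do not match); the paper instead observes that the tubular neighborhood of the whole tree, after capping the disc-bounding subchains, is $\Sigma_{g-1,2}$ with complement an annulus, and then localizes to $N_{2,2}$ by cutting along two circles ($a_{4g-8}$ and $a_{4g-6}$) --- adjust your decomposition accordingly and the rest of your outline goes through.
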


\begin{proof} The idea of this proof is the same as the odd genus case.
However, there are some technical differences.
Consider the maximal tree of Dehn twists $t_{a_i}$, $i=1,\ldots, 4g-5$, in
Figure~\ref{2grp2}, together which a $Y$-homeomorphism supported in the one holed
Klein bottle bounded by the circle $e$ generate the mapping class group ${\rm {Mod}}(N_{2g})$
(c.f. see \cite{K,SM}).

\begin{figure}[hbt]
  \begin{center}
    \includegraphics[width=8cm]{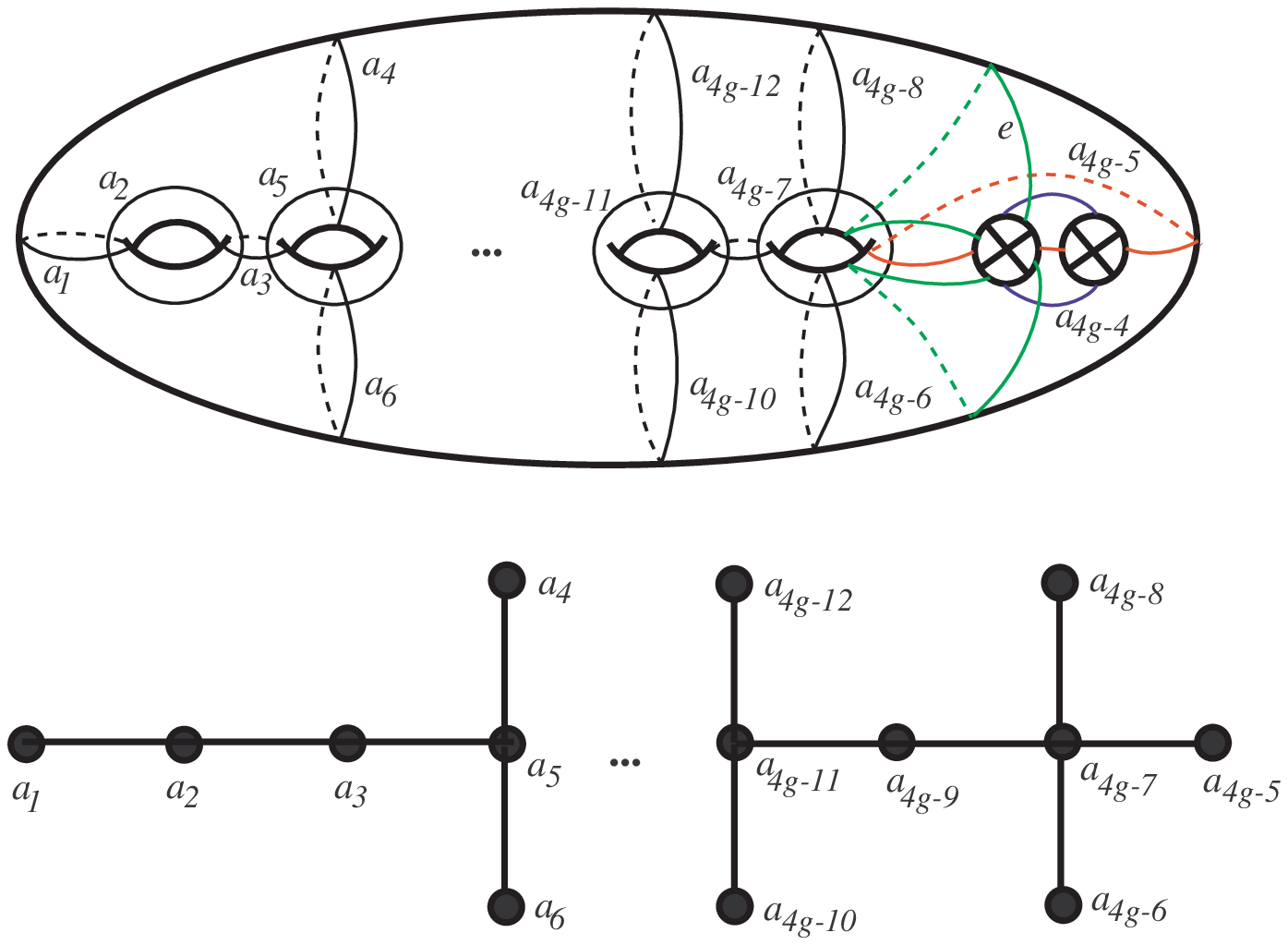}
    \caption{}
    \label{2grp2}
  \end{center}
 \end{figure}

The maximal tree above has tubular neighborhood diffeomorphic to $\Sigma_{g-1,2(g-1)}$.
So, if $\Phi$ is an automorphism of ${{\rm {Mod}}}(N_{2g})$, let
$t_{b_{i}} =\Phi(t_{a_{i}})$, $i=1,\ldots,4g-5$, then $b_{1}, \cdots, b_{4g-5}$ form a
tree as above.  Similar to the odd genus we can find a diffeomorphism
of a tubular neighborhood of the tree formed by $a_{i}$'s ($i=1,\ldots,4g-5$),
call $T$ onto that by $b_{i}$'s, call $T'$. Each maximal subchain bounding a
disc is sent to a maximal subchain bounding a disc. Therefore, we can find a
diffeomorphism $\varphi_{0}: T \cup (2(g-2))D^{2} \rightarrow T'\cup (2(g-2))D^{2}$,
which are both a copies of the orientable surface $\Sigma_{g-1,2}$.
Then $\chi(\Sigma_{g-1,2})=2-2(g-1)-2=2-2g$.
So, $\chi(N_{2g} \setminus \Sigma_{g-1,2})=(2-2g)-(2-2g)=0$. since,
$\Sigma_{g-1,2}$ has two boundary components $N \setminus \Sigma_{g-1,2}$
must be a cylinder and the ends are glued to $\Sigma_{g-1,2}$ so
that the surface becomes nonorientable. In particular, we see that
$N_{2g}$ is obtained by gluing the two boundary components of
$\Sigma_{g-1,2}$. Now, it is easy to extend the diffeomorphism
$\psi_{0}$ to $N$, say $\varphi$. Note that by construction
$\psi_{0}(a_{i})=a_{i}$, for all $i=1,\cdots,4g-5$, where
$\psi_{0}$ is a diffeomorphism of orientable subsurfaces $N_{2g}
\setminus \nu(a_{4g-4})$ onto $N_{2g} \setminus \nu(b_{4g-4})$. So, the
automorphism $\psi_{*}$ from ${{\rm {Mod}}}(N_{2g})$ to ${{\rm {Mod}}}(N_{2g})$
sends $t_{a_{i}}$ to $t_{b_{i}}$, $i=1,\cdots,4g-5$.

Composing $\Phi$ with $\psi_{*}^{-1}$ we may assume that
$\Phi(t_{a_{i}})=t_{a_{i}}$ $i=1,\cdots,4g-5$. Since $t_{a_{4g-4}}$
commutes with all other $t_{a_{i}}$ it should be supported in the
one holed Klein bottle obtained by cutting the surface $N_{2g}$
along $a_{4g-8}$, $a_{4g-7}$ and $a_{4g-6}$. However, there is a
unique nontrivial two-sided circle inside this one holed Klein bottle,
which is $a_{4g-4}$. So, $\Phi(t_{a_{4g-4}})=t_{a_{4g-4}}^{\pm}$.

Since $\Phi$ fixes all $a_{1},\cdots,a_{4g-6}$ any diffeomorphism
supported in $N_{2,2}$ will be sent by $\Phi$ to a diffeomorphism
supported in $N_{2,2}$. So, $\Phi$ induces an automorphism of the
mapping class group of the surface with boundary $N_{2,2}$ (Klein bottle with
two boundary components), obtained by cutting the surface along the circles
$a_{4g-8}$ and $a_{4g-6}$.

Let $N_4$ denote the surface in Figure~\ref{4rp2e}, which is obtained from
$N_{2,2}$ by gluing its boundary components.

Note that any element of
${{\rm {Mod}}}(N_{2,2})$ yields an element of ${{\rm {Mod}}}(N_{4})$. Indeed,
the kernel of the natural homomorphism,
${{\rm {Mod}}}(N_{2,2}) \to {{\rm {Mod}}}(N_{4})$, is $t_{a_{4g-8}}t_{a_{4g-6}}^{-1}$
(c.f. see Lemma 4.1. of \cite{SB}).

To go further we will need a presentation of the mapping class group
${{\rm {Mod}}}(N_{4})$ given in Theorem 2.1. of \cite{SB2}.

\begin{figure}[hbt]
  \begin{center}
    \includegraphics[width=5cm]{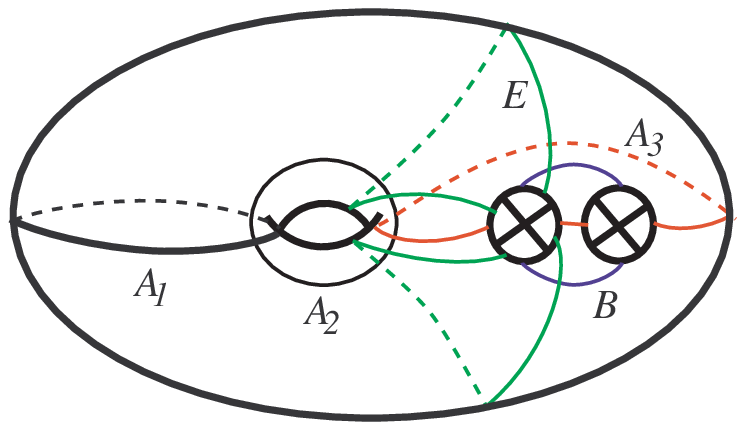}
    \caption{}
    \label{4rp2e}
  \end{center}
 \end{figure}

We will rename some of the circles on $N_{2g}$ so that the presentation in
\cite{SB2} would not be confusing: The circles $a_{4g-8}$, $a_{4g-7}$, $a_{4g-5}$,
$a_{4g-4}$ and the Dehn twists about them will be denoted $A_1$, $A_2$, $A_3$
and $B$, respectively. In \cite{SB2}, it is given that the mapping class group of
$N_4$ is generated by the Dehn twists $A_1$, $A_2$, $A_3$, $B$ and a Y-homeomorphism
called $U_3$.

\begin{Lemma}
The relation $(U_{3}B)^{2} =1$ of ${{\rm {Mod}}}(N_{4})$ gives
$$(U_{3}B)^{2}=(A_{1}')^{k_{1}}(A_{1}'')^{k_{2}}$$ in
${{\rm {Mod}}}(N_{2,2})$, for some $k_{1}$ and $k_{2}$, where $A_{1}'$ and
$A_{1}''$ are the boundary components of $N_4$ cut along $A_1$.
Moreover in the mapping class group of $N_{2g}$, we get
$(U_{3}B)^{2}=(t_{a_{4g-8}})^{k_{1}}(t_{a_{4g-6}})^{k_{2}}$.

\end{Lemma}
\begin{proof}
The relation $(U_{3}B)^{2} =1$ holds in the mapping class group of
$N_4$ (Theorem 2.1 (11) of \cite{SB2}). Now Lemma 4.1 of \cite{SB}
finishes the proof of the first statement. The second statement holds
trivially.
\end{proof}

Let $L$ be normal closure of $t_{a_{i}}$'s and $t_b$. Hence, we have
${{\rm {Mod}}}(N_{2g})/L = \langle U_{3} \rangle \simeq \mathbb{Z}_{2}$.
Since $\Phi$ fixes all $t_{a_{i}}$'s and $t_b$, $\Phi$ induces an automorphism
$\bar{\Phi}$ of ${{\rm {Mod}}}(N_{2g})/ L$ and so that
$\bar{\Phi}(U_{3})= U_{3}$. So, $\Phi(U_{3})=U_{3}X$ for some $X \in L$.
Apply $\Phi$ to $U_{3}A_{1}'=A_{1}'U_{3}$ and
$U_{3}A_{1}''=A_{1}''U_{3}$ to get
$U_{3}XA_{1}'=A_{1}'U_{3}X=U_{3}A_{1}'X$. This implies that
$XA_{1}'=A_{1}'X$ and similarly $XA_{1}'' =A_{1}''X$. Also,
$U_{3}A_{3}= A_{3}^{-1}U_{3}$ yields
$U_{3}XA_{3}=A_{3}^{-1}U_{3}X=U_{3}A_{3}X$. It follows that
$XA_{3}=A_{3}X$. So, $X$ commutes with $A_{1}'$, $A_{1}''$ and
$A_{3}$. Indeed, by similar arguments $X$ commutes with all
$t_{a_{i}}$, $i\leq 4g-5$.

Clearly, $X$ cannot change the two sides of tubular neighborhoods of
$A_{1}'$ and $A_{1}''$ and hence $X$ is identity on a neighborhood
of $A_{1}'$ and $A_{1}''$. The mapping class $X$ cannot change the
two sides of a tubular neighborhood of $A_{3}$ also, since otherwise we would have
$XA_{3}=A_{3}^{-1}X$. So, $X$ is an element of the mapping class group
of the four holed sphere component of the surface $N_{2g}$ cut along the
curves $A_{1}'$,  $A_{1}''$ and  $A_{3}$:

\begin{figure}[hbt]
  \begin{center}
    \includegraphics[width=5cm]{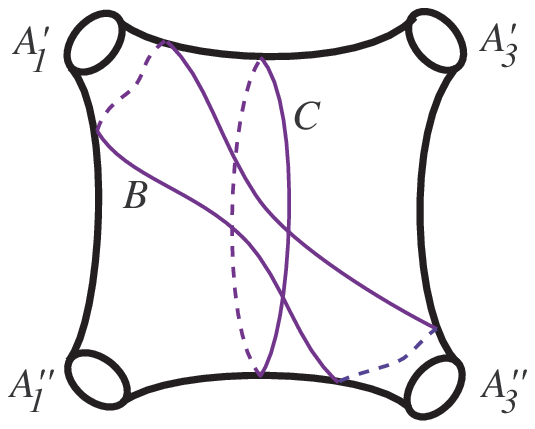}
    \caption{}
    \label{1o}
  \end{center}
 \end{figure}

Hence, $X = (A_{1}')^{n_{1}}(A_{1}'')^{n_{1}'}(A_{3}')^{n_{2}}(A_{3}'')^{n_{2}'}w(B,C)$,
for some integers $n_{1}, \ n_{1}', \ n_{2}, \ n_{2}'$ and word $w(B,C)$, where
$A_{3}'$ and $A_{3}''$ are the new boundary circles obtained by cutting the surface
along $A_3$.

This defines a map
$\rm Out({{\rm {Mod}}}(N_{2g})) \to {{\rm {Mod}}}(\Sigma_{0,4})$, which is indeed a
homomorphism by construction. It is clear that it is also injective.
\end{proof}

\bigskip
\subsection{Some sporadic cases}
In this section we will state some results about outer automorphism groups of some
of the surfaces not covered in the above theorems.

\begin{Proposition}\label{sporadic cases}
{\bf 1)} ${{\rm {Mod}}}(N_2)={\mathbb Z}_2\times {\mathbb Z}_2$ and hence
$\rm Out({{\rm {Mod}}}(N_2))=S_3$, the symmetric group on three letters.\\
{\bf 2)} $\rm Out({{\rm {Mod}}}(N_{2,1}))={\mathbb Z}_2\times {\mathbb Z}_2$.\\
{\bf 3)} $\rm Out({{\rm {Mod}}}(N_3))$ contains a copy of $\mathbb Z_2$.\\
{\bf 4)} $\rm Out({{\rm {Mod}}}(N_{3,1}))$ contains a copy of the
infinite cyclic group $\mathbb Z$.
\end{Proposition}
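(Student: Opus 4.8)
The plan is to handle the four parts separately, exploiting known presentations of the mapping class groups in question. For part~(1), the surface $N_2$ is the Klein bottle, whose mapping class group is known to be $\mathbb{Z}_2\times\mathbb{Z}_2$ (Lickorish). Since $\mathrm{Aut}(\mathbb{Z}_2\times\mathbb{Z}_2)\cong \mathrm{GL}_2(\mathbb{F}_2)\cong S_3$ and the inner automorphism group of an abelian group is trivial, we get $\mathrm{Out}(\mathrm{Mod}(N_2))=S_3$. For part~(2), I would start from the presentation $\mathrm{Mod}(N_{2,1})=\langle y,t_a\mid yt_ay^{-1}=t_a^{-1}\rangle\cong\mathbb{Z}\rtimes\mathbb{Z}$ recorded in Remark~\ref{Y-hom}. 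First I would compute the center: from Remark~\ref{Y-hom}, $C(\mathrm{Mod}(N_{2,1}))=\langle t_b\rangle$ with $t_b=y^2$, and the abelianization is $\mathbb{Z}\oplus\mathbb{Z}_2$ (killing the commutator $t_a^2$). Any automorphism must preserve the center $\langle y^2\rangle$, must send the unique conjugacy class structure of $t_a$ (the normal closure $\langle\langle t_a\rangle\rangle$ is the kernel of $\mathrm{Mod}(N_{2,1})\to\mathbb{Z}$, hence characteristic) to itself, and must send $y$ to an element mapping to a generator of $\mathbb{Z}\oplus\mathbb{Z}_2/(\text{torsion})$; so $t_a\mapsto t_a^{\pm1}$ and $y\mapsto y^{\pm1}t_a^{k}$. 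Checking the relation forces consistency, and quotienting by the inner automorphisms (conjugation by $t_a^j$ sends $y\mapsto yt_a^{2j}$, conjugation by $y$ sends $t_a\mapsto t_a^{-1}$) leaves exactly the two independent sign choices, giving $\mathrm{Out}\cong\mathbb{Z}_2\times\mathbb{Z}_2$.

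For parts~(3) and~(4), the idea is to exhibit explicit non-inner automorphisms of infinite or $\mathbb{Z}_2$ order. For part~(4), $N_{3,1}$ is exactly the surface whose mapping class group presentation (in terms of $A_1,A_3,B,V$ with $V^2=(A_1^2A_3B)^3=C$ central) appears in the proof of Theorem~\ref{MainThm1}. That proof constructs, for each integer $m$, an automorphism fixing $A_1,A_3,B$ and sending $V\mapsto (A_1B)^{3m}V$, and shows these are pairwise distinct modulo inner automorphisms; restricting that very construction to $\mathrm{Mod}(N_{3,1})$ itself (rather than passing to $\mathrm{Mod}(N_{2g+1})$) yields an injection $\mathbb{Z}\hookrightarrow\mathrm{Out}(\mathrm{Mod}(N_{3,1}))$. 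I would simply verify that the assignment $A_1\mapsto A_1$, $A_3\mapsto A_3$, $B\mapsto B$, $V\mapsto(A_1B)^{3m}V$ respects relations $1')$–$6')$ — relation $3')$ and $4)$ use $(A_1B)^3$ commuting appropriately, and $6')$ holds since $(A_1B)^{3m}V$ squared equals $V^2$ by the same computation $DD'=\mathrm{id}$ as in Theorem~\ref{MainThm1} — and that for $m\neq0$ it is not inner, again by the centralizer argument given there (an inner automorphism trivial on all the Dehn twists must be trivial). For part~(3), $N_3$ is the connected sum of a torus and a projective plane; here I would use the hyperelliptic-type involution or the orientation-reversing symmetry of $N_3$ to produce an order-two outer automorphism, or alternatively invoke the presentation of $\mathrm{Mod}(N_3)$ and produce an explicit involutive automorphism (e.g. inverting a suitably chosen generator) that is not realized by conjugation; checking non-innerness reduces to a short computation in the known finite-index structure.

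The main obstacle I anticipate is part~(3): unlike the others it does not come with a presentation already displayed in the paper, so I would need to either cite the known presentation of $\mathrm{Mod}(N_3)$ (it has a normal subgroup isomorphic to $\mathrm{Mod}(\Sigma_{1,1})$-flavored structure, with $\mathrm{Mod}(N_3)$ itself containing $\mathrm{GL}_2(\mathbb{Z})$-like quotients) and pin down a concrete non-inner involution, or argue more structurally. The cleanest route is probably to note that $N_3$ has an orientation-type symmetry exchanging two of the three crosscaps — or to use that $\mathrm{Mod}(N_3)$ surjects onto $\mathrm{GL}_2(\mathbb{Z})$ with the kernel generated by a $Y$-homeomorphism, and that negating this $Y$-homeomorphism (fixing a lift of $\mathrm{GL}_2(\mathbb{Z})$) defines an order-two automorphism which cannot be inner because it is trivial on a finite-index subgroup yet nontrivial. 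For parts~(1), (2), (4) the computations are routine modulo the cited presentations, so I do not expect difficulty there.
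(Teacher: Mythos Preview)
Your treatment of parts (1), (2), and (4) matches the paper's. The paper says (1) is immediate, (2) follows directly from the presentation in Remark~\ref{Y-hom}, and for (4) it writes down exactly the automorphism $A_1\mapsto A_1$, $A_3\mapsto A_3$, $B\mapsto B$, $V\mapsto(A_1B)^3V$, checks the relations, and observes that anything commuting with $A_1,A_3,B$ is a power of the central element $C$, whence no nontrivial power of this map is inner. This is precisely your plan.

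The gap is part (3). The paper's proof is a single sentence: ``almost the same argument given above for part~(4) proves part~(3).'' In other words, one uses the known presentation of $\mathrm{Mod}(N_3)$ (which is the presentation of $\mathrm{Mod}(N_{3,1})$ with the boundary twist $C$ killed) and runs the identical construction --- fix the Dehn-twist generators, perturb the $Y$-generator by the same factor --- obtaining an order-two class in $\mathrm{Out}$ rather than an infinite-order one because of the extra relation. You did not see this route and instead proposed several alternatives. Two of them are confused: a ``hyperelliptic-type involution'' or a ``symmetry exchanging two crosscaps'' is a diffeomorphism of $N_3$, hence an element of $\mathrm{Mod}(N_3)$, and conjugation by it is \emph{inner} by definition --- it cannot produce a nontrivial outer class. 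Your third suggestion (negate the $Y$-homeomorphism while fixing a Dehn-twist lift of the quotient) is in spirit the right idea and is essentially what the paper does, but you do not verify that this respects the relations, and your non-innerness criterion (``trivial on a finite-index subgroup yet nontrivial'') is not valid in general: an inner automorphism by a central element is trivial on the whole group yet nontrivial as a group element, and more relevantly, centralizers of finite-index subgroups need not be trivial. The correct non-innerness check is the centralizer argument you already used in~(4).
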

\begin{proof}
The proof of (1) is immediate. One can prove part (2) directly using the
presentation given in Remark~\ref{Y-hom}.

To prove part (4) recall the presentation of the mapping class group of $N_{3,1}$
and the candidate automorphism defined at the end of Theorem~\ref{MainThm1}:

Generators: $A_{1}$, $A_{3}$, $B$ and $V$.  Relations:\\
$1')$ $A_{1} A_{3} = A_{3} A_{1}$\\
$2')$ $A_{1} B A_{1} = B A_{1} B$, and $A_{3} B A_{3} = B A_{3} B$\\
$3')$ $VA_{1}V^{-1} = A_{1}^{-1}$\\
$4)$ $VBV^{-1} = B^{-1}$\\
$5')$ $V^{2}A_{3} = A_{3}V^{2}$\\
$6')$ $V^{2} = (A_{1}^{2} A_{3} B)^{3}$($=C$), and the automorphism is given by
$\Phi(A_{1})=A_{1}$, $\Phi(A_{3})=A_{3}$, $\Phi(B)=B$ and $\Phi(V)= (A_{1}B)^3V$.
It is easy to check that this map is really an automorphism of the mapping class group.
The fact that any power of it is not an inner automorphism follows from the fact that
any mapping class commuting with the Dehn twists $A_1$, $A_3$ and $B$ should be a power of
the Dehn twist about $C$. However, $C^{-1}VC=V\neq (A_1B)^{3m}V$ for any $m \neq 0$. This
finishes the proof.

Finally, almost the same argument given above for part (4) proves part (3).
\end{proof}

\subsection*{Acknowledgment} This work has been initiated and partially completed during
my stay at the Michigan State University Mathematics Department in the Spring of 2007.
I would like to thank the Mathematics Department and specially to Prof. N. Ivanov and
Prof. S. Akbulut for invitation.

\bigskip
\providecommand{\bysame}{\leavevmode\hboxto3em{\hrulefill}\thinspace}

\end{document}